\newtheorem{thm}{Theorem}[section]
\newtheorem{defn}[thm]{Definition}
\newtheorem{prop}[thm]{Proposition}
\newtheorem{cor}[thm]{Corollary}
\newtheorem{rema}[thm]{Remark}
\newcommand{\nn}{\nonumber \\}
 \newcommand{\res}{\mbox{\rm Res}}
\renewcommand{\hom}{\mbox{\rm Hom}}
\newcommand{\wt}{\mbox{\rm wt}\,}
\newcommand{\swt}{\mbox{\rm {\scriptsize wt}}\,}
\newcommand{\Y}{\mathcal{Y}}
\newcommand{\C}{\mathbb{C}}
\newcommand{\Z}{\mathbb{Z}}
\newcommand{\N}{\mathbb{N}}
\renewcommand{\l}{\llfloor}
\renewcommand{\r}{\rrfloor}
\newlength{\@pxlwd} \newlength{\@rulewd} \newlength{\@pxlht}
\def\sprite#1(#2,#3)[#4,#5]{
   \edef\@sprbox{\expandafter\@cdr\string#1\@nil @box}
   \expandafter\newsavebox\csname\@sprbox\endcsname
   \edef#1{\expandafter\usebox\csname\@sprbox\endcsname}
   \expandafter\setbox\csname\@sprbox\endcsname =\hbox\bgroup
   \vbox\bgroup
  \catcode`.=\active\catcode`B=\active\catcode`:=\active\catcode`|=\active
      \@pxlwd=#4 \divide\@pxlwd by #3 \@rulewd=\@pxlwd
      \@pxlht=#5 \divide\@pxlht by #2
      \def .{\hskip \@pxlwd \ignorespaces}
      \def B{\@ifnextchar B{\advance\@rulewd by \@pxlwd}{\vrule
         height \@pxlht width \@rulewd depth 0 pt \@rulewd=\@pxlwd}}
      \def :{\hbox\bgroup\vrule height \@pxlht width 0pt depth
0pt\ignorespaces}
      \def |{\vrule height \@pxlht width 0pt depth 0pt\egroup
         \prevdepth= -1000 pt}
   }
\def\endsprite{\egroup\egroup}
\newcommand{\raisemath}[1]{\mathpalette{\raisem@th{#1}}}
\newcommand{\raisem@th}[3]{\raisebox{#1}{$#2#3$}}
\newcommand{\subalign}[1]{%
	\vcenter{%
		\Let@ \restore@math@cr \default@tag
		\baselineskip\fontdimen10 \scriptfont\tw@
		\advance\baselineskip\fontdimen12 \scriptfont\tw@
		\lineskip\thr@@\fontdimen8 \scriptfont\thr@@
		\lineskiplimit\lineskip
		\ialign{\hfil$\m@th\scriptstyle##$&$\m@th\scriptstyle{}##$\hfil\crcr
			#1\crcr
		}%
	}%
}
\def\hboxtr{\FormOfHboxtr} 
\title{ {\bf  $C_{1}$-cofiniteness and vertex tensor categories} }
\date{}
\author{Yi-Zhi Huang}
\begin{document}

\bibliographystyle{alpha}
\maketitle
\begin{abstract}
We first generalize the logarithmic tensor category theory of 
Huang-Lepowsky-Zhang to the more general 
case that the module category for 
a vertex operator algebra $V$ (more generally a M\"{o}bius vertex algebra)
might not be closed under the 
contragredient functor. Then by verifying the assumptions
to use this generalization, we obtain that  (logarithmic) intertwining operators
among $C_{1}$-cofinite grading-restricted generalized $V$-modules 
satisfy the associativity property (operator product expansion)
and the category of $C_{1}$-cofinite grading-restricted generalized $V$-modules
has a natural vertex tensor category structure. 
In particular, this category has a  natural  
braided tensor category structure with a twist. 
\end{abstract}

\renewcommand{\theequation}{\thesection.\arabic{equation}}
\renewcommand{\thethm}{\thesection.\arabic{thm}}
\setcounter{equation}{0}
\setcounter{thm}{0}
\section{Introduction}

In the study of logarithmic conformal field theories and the 
nonsemisimple representation theory of vertex operator 
algebras, Lepowsky, Zhang and the author developed a 
logarithmic tensor category theory in \cite{HLZ0}--\cite{HLZ8}
by generalizing the early semisimple tensor category theory 
developed by Lepowsky and the author in  \cite{tensor0}--\cite{tensor3} and 
by the author in \cite{tensor4}. Under suitable assumptions
on a vertex operator algebra (more generally, a
M\"{o}bius vertex algebra) $V$ and 
a module category for $V$,   the associativity (operator product expansion)
of (logarithmic) intertwining operators among objects of the category 
is proved and a natural vertex tensor category 
structure and consequently a natural braided tensor category structure with a twist
are constructed in \cite{HLZ0}--\cite{HLZ8}. To use this theory, one needs 
to verify the assumptions in  \cite{HLZ0}--\cite{HLZ8}. 
See Assumption 10.1 in \cite{HLZ6} and 
Assumptions 12.1 and 12.2 in \cite{HLZ8} and also 
Section 2 below for a complete list of assumptions.

A notion of quasi-rational module for a $\mathcal{W}$-algebra 
was first introduced by Nahm in \cite{N}.  In mathematics, 
$\mathcal{W}$-algebras are formulated rigorously as vertex operator algebras and 
quasi-rational modules correspond to $C_{1}$-cofinite modules. 
For a suitable module $W$ with the vertex operator map $Y_{W}$ 
for a vertex operator algebra $V$,
we let $C_{1}(W)$ be the subspace spanned by 
$\res_{x}x^{-1}Y_{W}(v, x)w$ for $v\in V_{+}=\coprod_{n\in \Z_{+}}V_{(n)}$ and $w\in W$.
$W$ is $C_{1}$-cofinite means $\dim W/C_{1}(W)<\infty$.
Nahm in \cite{N}  argued 
that if a suitable fusion product $W_{12}$ 
of two $V$-modules $W_{1}$ and $W_{2}$ 
exists, then 
$$\dim ((W_{12})/C_{1}(W_{12}))
\le \dim (W_{1}/C_{1}(W_{1})) \dim (W_{2}/C_{1}(W_{2})).$$
Nahm did not give a construction of a fusion product in \cite{N}. 
Instead, Nahm's result is derived from some basic 
assumptions in the physical study of 
conformal field theory. 

In mathematics, Abe and Nagatomo showed in \cite{AN}
that for suitable vertex operator algebras, the spaces
of conformal blocks on the Riemann sphere among $C_{1}$-cofinite modules 
are finite-dimensional.
The importance of $C_{1}$-cofiniteness of suitable modules for 
vertex operator algebras in the study of the associativity of intertwining 
operators and in the construction of vertex and braided tensor category 
structures was first noticed by the author in \cite{H-diff-eqn}. 
 In \cite{H-diff-eqn}, the author proved that products of intertwining operators 
(without logarithms)
among $C_{1}$-cofinite modules for a vertex operator algebra
satisfy differential equations 
of regular singular points. In fact, the differential equations are also satisfied 
by products of logarithmic intertwining operators among 
$C_{1}$-cofinite grading-restricted generalized modules for a 
M\"{o}bius vertex algebra (see \cite{HLZ7}). Using these differential 
equations, the convergence and extension property for products of 
(logarithmic) intertwining operators are proved in \cite{H-diff-eqn}
and \cite{HLZ7}. It was also proved in \cite{HLZ7} that the expansion 
condition (one of the assumptions in \cite{HLZ0}--\cite{HLZ8}) follows from 
the convergence and extension property for products of 
(logarithmic) intertwining operators.

In \cite{H-finite-length}, the author 
verified the assumptions  in \cite{HLZ0}--\cite{HLZ8} 
for a $C_{2}$-cofinite vertex operator algebra
of positive energy (CFT type) and the category of 
grading-restricted generalized $V$-modules. 
In fact, the author in \cite{H-finite-length} 
verified these assumptions for a more general vertex operator algebra $V$
and the category of $C_{1}$-cofinite 
grading-restricted generalized $V$-modules satisfying additional conditions. 
But since the additional conditions are quite strong, 
the results in  \cite{H-finite-length} do not apply to many interesting
classes of vertex operator algebras and many 
interesting module categories. In \cite{M}, Miyamoto 
proved a weak version of the inequality of Nahm mentioned above and 
as a consequence, he obtained that the category of $C_{1}$-cofinite 
$N$-gradable weak $V$-modules
(or equivalently, $C_{1}$-cofinite grading-restricted generalized $V$-modules)
for a general vertex operator algebra $V$ is closed 
under a tensor product bifunctor. In \cite{MS}, McRae and Sopin formulated 
a generalization of the inequality of Nahm mentioned above (see Proposition 3.6 
in \cite{MS}) and observed that 
one can also obtain the inequality of Nahm using the number of 
independent solutions of the differential equations in \cite{M}.

 In \cite{CHY}, the results of \cite{H-diff-eqn} 
for $C_{1}$-cofinite $V$-modules 
was first applied to the case that $V$ is an affine vertex operator algebra at 
an admissible level and $\mathcal{C}$ is a semisimple category of 
$C_{1}$-cofinite ordinary $V$-modules. 
In \cite{CJORY}, 
Creutzig,  Jiang, Orosz Hunziker, Ridout and Yang
verified the assumptions in  \cite{HLZ0}--\cite{HLZ8} for 
a Virasoro vertex operator algebra $V$ and the category of 
$C_{1}$-cofinite grading-restricted generalized $V$-modules. 
In \cite{CY}, Creutzig and Yang 
showed that if all the irreducible $V$-modules are 
$C_1$-cofinite and the category of grading-restricted generalized
$V$-modules are the same as the category of finite-length grading-restricted 
generalized $V$-modules, then the assumptions 
in \cite{HLZ0}--\cite{HLZ8} hold. In fact, the following result was implicitly proved  in
 \cite{CJORY}  and \cite{CY}, explicitly stated and proved by McRae in \cite{Mc}
(see Theorem 2.3 in \cite{Mc}), and explicitly stated and generalized to 
vertex operator superalgebras by 
Creutzig, McRae, Orosz Hunziker  and Yang
in \cite{CMOY} (see Theorem 2.25 in \cite{CMOY}):

\begin{thm}[\cite{CJORY}, \cite{CY}, \cite{Mc}, \cite{CMOY}]\label{CMY}
Let $V$ be a vertex operator algebra. If the category of $C_{1}$-cofinite 
grading-restricted generalized $V$-modules is closed under the contragredient functor,
then it has a natural braided tensor category structure.
\end{thm}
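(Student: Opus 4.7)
The plan is to verify the assumptions of the HLZ logarithmic tensor category theory (Assumption 10.1 of \cite{HLZ6} together with Assumptions 12.1 and 12.2 of \cite{HLZ8}) for the category $\mathcal{C}$ of $C_{1}$-cofinite grading-restricted generalized $V$-modules, and then invoke the main construction of \cite{HLZ0}--\cite{HLZ8} to obtain the braided tensor category structure.

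First I would check the basic closure properties of $\mathcal{C}$. Closure under finite direct sums and under taking grading-restricted generalized submodules and quotients is standard for $C_{1}$-cofinite modules; closure under the contragredient functor is the standing hypothesis. The essential closure statement is that $\mathcal{C}$ is stable under the $P(z)$-tensor product bifunctor, and this is exactly the content of Miyamoto's theorem in \cite{M} (alternatively, of the generalized Nahm inequality of McRae--Sopin in \cite{MS}), which bounds $\dim(W_{12}/C_{1}(W_{12}))$ in terms of $\dim(W_{i}/C_{1}(W_{i}))$.

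Next I would establish the convergence and extension property for products and iterates of logarithmic intertwining operators among objects of $\mathcal{C}$. By \cite{H-diff-eqn} in the ordinary case, and by \cite{HLZ7} in the logarithmic case, $C_{1}$-cofiniteness forces such products and iterates to satisfy systems of differential equations with regular singular points. The convergence and extension property then follows by the classical theory of regular singular points, and by \cite{HLZ7} this property in turn implies the expansion condition required by HLZ.

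With all HLZ assumptions verified, I would then invoke the main theorem of \cite{HLZ0}--\cite{HLZ8} to conclude that $\mathcal{C}$ carries a natural braided tensor category structure (in fact a vertex tensor category structure with a twist). I expect the main obstacle to be the $P(z)$-tensor product closure step: it is not automatic that the HLZ tensor product of two $C_{1}$-cofinite modules is itself $C_{1}$-cofinite, and this is where the generalized Nahm inequality of \cite{M} and \cite{MS} is indispensable. The hypothesis of closure under contragredients plays its role precisely here as well, since the HLZ construction of the $P(z)$-tensor product is effected inside a completion of a dual space built from contragredients, and without that hypothesis one cannot directly conclude that the resulting tensor product module lies in $\mathcal{C}$. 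Once these ingredients are assembled, the remaining verifications reduce to a direct application of \cite{H-diff-eqn}, \cite{HLZ7} and \cite{HLZ8}.
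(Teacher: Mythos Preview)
Your overall strategy---verify the HLZ assumptions and invoke \cite{HLZ0}--\cite{HLZ8}---is exactly the route taken in \cite{CJORY}, \cite{CY}, \cite{Mc}, \cite{CMOY}, and is how the paper itself describes the proof. However, your sketch has a real gap in the list of assumptions you check.

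You address closure properties (Assumption 3), the convergence and extension property (Assumption 8), and convergence of higher products (Assumption 6), but you never treat Assumption 7 (every finitely-generated lower-bounded generalized $V$-module lies in $\mathcal{C}$) or its weaker replacement Assumption 10, nor Assumption 2/9 on the global nilpotency bound for $L(0)_{N}$. The implication you invoke---that the convergence and extension property yields the expansion condition via \cite{HLZ7}---is Theorem 11.4 there, and that theorem \emph{requires} Assumption 7 as an input. Without it, the expansion condition does not follow, and the HLZ machinery cannot be applied. Relatedly, closure under submodules for $C_{1}$-cofinite modules is \emph{not} ``standard''; a submodule of a $C_{1}$-cofinite module need not be $C_{1}$-cofinite in general.

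The way \cite{CJORY}, \cite{CY}, \cite{Mc} fill this gap is the key step you are missing: they prove that, \emph{under the contragredient-closure hypothesis}, the category of $C_{1}$-cofinite grading-restricted generalized $V$-modules coincides with the category of finite-length modules whose composition factors are $C_{1}$-cofinite. This identification is what makes Assumption 7 (or 10) verifiable, and is where the contragredient hypothesis does its real work---not merely in the dual-space construction of the $P(z)$-tensor product as you suggest. Once finite length is in hand, Assumption 9 and the remaining closure properties also follow readily. So your plan is on the right track, but you need to insert the finite-length argument before the appeal to \cite{HLZ7}.
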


Note that the results in \cite{CJORY}, \cite{CY}, \cite{Mc}, and \cite{CMOY} 
are obtained by applying the theory in \cite{HLZ0}--\cite{HLZ8}. 
So the theorem above is also true for a grading-restricted M\"{o}bius 
vertex algebra $V$. In addition, under the same condition, 
the associativity of (logarithnic) intertwining operator
among the category of $C_{1}$-cofinite 
grading-restricted generalized $V$-modules also holds and there is a 
natural vertex tensor category structure on this category. 

Theorem \ref{CMY} applies to 
$C_{1}$-cofinite module categories for many interesting vertex operator algebras, 
including, for example,  simple affine vertex operator algebras at all admissible 
and many non-admissible levels (\cite{CHY}, \cite{CY}), 
Virasoro vertex operator algebras of
all central charges (\cite{CJORY}),  the affine vertex operator superalgebra of 
$\mathfrak{gl}(1|1)$ (\cite{CMY3}), 
the singlet vertex operator algebras (\cite{CMY1}, \cite{CMY4}), 
the universal affine vertex operator algebra of $\mathfrak{sl}_2$ 
at admissible levels (\cite{MY}),
$N=1$ super Virasoro
vertex operator superalgebras of all central charges (\cite{CMOY}), and 
$N=2$ super Virasoro vertex operator algebras at central charge $\frac{3k}{k+2}$
(\cite{C}). 
Theorem  \ref{CMY} also applies indirectly to 
some non-$C_{1}$-cofinite module categories for some vertex operator algebras. 
In the case of the simple affine 
vertex operator algebra $V$ of $\mathfrak{sl}_2$ at an admissible level $k$, 
objects of the category of finitely-generated weight $V$-modules 
are in general not $C_{1}$-cofinite. But its Kazama-Suzuki dual (see \cite{KS}) 
is the $N=2$ super Virasoro vertex operator algebra of the central charges $\frac{3k}{k+2}$.
In \cite{C}, Creutzig used the vertex tensor category structure for 
this $N=2$ super Virasoro vertex operator algebra to obtain 
a vertex tensor category structure on the category of finitely-generated weight $V$-modules.

Despite the great progress discussed above, whether the category of 
$C_{1}$-cofinite grading-restricted modules 
for a general vertex operator algebra $V$ has a natural 
vertex tensor category structure is still an open problem. 
In this paper, we solve this problem. Here is the main theorem of the present paper:

\begin{thm}\label{main}
Let $V$ be a grading-restricted M\"{o}bius vertex algebra 
and $\mathcal{C}$ 
the category of $C_{1}$-cofinite grading-restricted generalized $V$-modules. 
Then  the associativity of (logarithmic) intertwining operators among objects of $\mathcal{C}$
holds and the category $\mathcal{C}$ has a natural vertex tensor category structure. 
In particular, the category $\mathcal{C}$ has a natural
braided tensor category structure with a twist. 
\end{thm}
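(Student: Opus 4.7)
The plan has two parts, following the strategy announced in the abstract: first, generalize the HLZ logarithmic tensor category framework to drop the assumption that the module category is closed under the contragredient functor; second, verify the (weakened) assumptions of this generalization for $\mathcal{C}$.

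For the generalization, I would carefully audit the constructions in \cite{HLZ0}--\cite{HLZ8} to isolate the exact points where contragredient closure is invoked. In the HLZ framework, the $P(z)$-tensor product $W_{1}\boxtimes_{P(z)}W_{2}$ is constructed through a subspace of the algebraic dual of $W_{1}\otimes W_{2}$ equipped with a $V$-module-like action, and contragredients of modules in the category enter the universal property characterizing $P(z)$-intertwining maps. The idea is to allow these contragredients to live in the larger ambient category of all grading-restricted generalized $V$-modules, rather than demanding they lie in $\mathcal{C}$; one only requires that $W_{1}\boxtimes_{P(z)}W_{2}$ itself be an object of $\mathcal{C}$. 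Under this reformulation, one must revisit the existence and characterization of $P(z)$-tensor products, the compatibility of $P(z)$-intertwining maps with intertwining operators, the construction of the associativity isomorphism, and ultimately the full vertex and braided tensor category axioms, reproving each without contragredient closure.

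For the second part, once the generalization is in place, the assumptions to verify for $\mathcal{C}$ reduce essentially to: (i) closure of $\mathcal{C}$ under the tensor product bifunctor, which is Miyamoto's theorem in \cite{M}; (ii) the convergence and extension property for products of logarithmic intertwining operators among objects of $\mathcal{C}$, which follows from the regular-singular-point differential equations established in \cite{H-diff-eqn} and \cite{HLZ7} for $C_{1}$-cofinite modules; and (iii) the expansion condition, which by \cite{HLZ7} is a consequence of the convergence and extension property. With these assumptions verified, the generalized HLZ machinery delivers the associativity of (logarithmic) intertwining operators among objects of $\mathcal{C}$, the vertex tensor category structure on $\mathcal{C}$, and, as a corollary, the natural braided tensor category structure with a twist.

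The main obstacle is expected to be the generalization step itself. Although contragredient closure is conceptually a technical convenience, removing it requires a substantive structural rewriting of \cite{HLZ0}--\cite{HLZ8}, because contragredients appear at several foundational points: in defining the dual-side space in which $W_{1}\boxtimes_{P(z)}W_{2}$ lives, in the construction of the associativity isomorphism between iterated $P(z_{1})$- and $P(z_{2})$-tensor products, and in the proofs of naturality and coherence. A particular subtlety is to ensure that the various intermediate ``dual-side'' objects that arise along the way are still grading-restricted generalized $V$-modules, so that contragredient and module-on-dual arguments continue to make sense, even when these intermediate objects are themselves not $C_{1}$-cofinite and hence not in $\mathcal{C}$.
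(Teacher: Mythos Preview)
Your two-part strategy matches the paper's, and your description of the generalization step is accurate in spirit. But your verification step has two genuine gaps.

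First, claim (iii) is wrong as stated. In \cite{HLZ7}, Theorem 11.4 does not derive the expansion condition from the convergence and extension property alone; it also requires that every finitely-generated lower-bounded generalized $V$-module lie in $\mathcal{C}$, which fails in general for the $C_{1}$-cofinite category. In the paper's generalized framework this hypothesis is replaced by a new condition (Assumption 12): for any generalized $L'_{P(z)}(0)$-eigenvector $\lambda\in(W_{1}\otimes W_{2})^{*}$ satisfying the $P(z)$-compatibility condition, if the generated module $W_{\lambda}$ is lower bounded then $W_{\lambda}$ is grading-restricted and its contragredient $W_{\lambda}'$ lies in $\mathcal{C}$. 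Verifying this is the heart of Section 4. The paper proves a structural result (Theorem \ref{gradings-submod}) that any lower-bounded generalized $V$-submodule $W$ of $(W_{1}\otimes W_{2})^{*}$ has $W'$ a $C_{1}$-cofinite grading-restricted surjective product of $W_{1}$ and $W_{2}$; this rests on the Nahm inequality (Theorem \ref{n-W-1-W-2}) together with a maximality argument bounding the possible weight supports uniformly. Without this, your route to the expansion condition, and hence to associativity, does not close. You also omit Assumption 9 (uniform nilpotence of $L(0)_{N}$ and finitely many weight congruence classes), which the paper handles separately via Proposition \ref{C-1-l.g.r}.

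Second, claim (ii) overlooks that the differential equations of \cite{H-diff-eqn} require all four modules $W_{0},W_{1},W_{2},W_{3}$ entering $T=R\otimes W_{0}\otimes W_{1}\otimes W_{2}\otimes W_{3}$ to be $C_{1}$-cofinite, where $W_{0}$ is the contragredient of the target module. Since $\mathcal{C}$ is not closed under contragredients, $W_{0}$ need not be $C_{1}$-cofinite. The paper fixes this (Theorem \ref{sys1}) by observing that for the differential equations one only needs the $R$-submodule of $T/J$ generated by the $L(-1)$-iterates of a fixed four-tuple to be finitely generated, and for this it suffices that $W_{0}$ and $W_{3}$ be quasi-finite-dimensional: one restricts to finitely many weight components of $W_{0},W_{3}$ and runs the Noetherian argument there. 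This refinement is essential and missing from your outline.
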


This theorem is proved in Section 4. The proof is based on the generalizations and modifications 
given in this paper of the 
results and methods in \cite{N}, \cite{H-diff-eqn}, \cite{HLZ0}--\cite{HLZ8}, \cite{M}, 
\cite{CJORY}, \cite{CY}, and \cite{Mc}. 
We also note that Theorem \ref{main} above and 
Theorem 2.25 in \cite{CMOY} can be generalized easily to 
a corresponding result for a grading-restricted 
M\"{o}bius vertex superalgebra $V$.

The main reason why Theorem \ref{CMY} 
still needs the condition that the category is closed under the contragredient functor
is because  this condition is one of the assumptions 
in \cite{HLZ0}--\cite{HLZ8}. We cannot apply directly the logarithmic 
tensor category theory \cite{HLZ0}--\cite{HLZ8} to the  
category of $C_{1}$-cofinite grading-restricted modules  since 
in general, this category might not be closed under the contragredient bifunctor. 
In this paper, we generalize the theory 
in \cite{HLZ0}--\cite{HLZ8} by removing this assumption.  
We obtain several assumptions in this generalization such that 
when these assumptions are satisfied, the category that we are considering 
has natural vertex and braided tensor category structures. 

To verify these assumptions in this generalization, 
we show that the arguments and proofs in \cite{H-diff-eqn} used 
to derive the differential equations 
still  works in the case that 
the contragredient of a $C_{1}$-cofinite grading-restricted $V$-module
might not be $C_{1}$-cofinite. We also recall some results of 
Miyamoto in \cite{M} and give a proof of the generalization formulated in \cite{MS}
of the inequality of Nahm 
 in \cite{N} without using the differential equations in \cite{M}. 
Using these results, we  verify that for any grading-restricted M\"{o}bius vertex algebra
(in particular,
a vertex operator algebra)  $V$ and 
 the category of $C_{1}$-cofinite grading-restricted $V$-modules,
 the assumptions to use our generalization of the theory in \cite{HLZ0}--\cite{HLZ8}
are satisfied. In fact, the proofs of the relevant results in \cite{CJORY}, \cite{CY},  \cite{Mc}
and \cite{CMOY} 
can  be modified to verify some of these assumptions. 
For the reader's convenience,  we still give the full details of the verification. 
Then by our generalization, we obtain the associativity (operator product expansion)
of (logarithmic) intertwining operators among
$C_{1}$-cofinite grading-restricted $V$-modules and 
natural vertex and braided tensor category structures
on this category.  

Theorem \ref{main} above
confirms the conjecture that 
$C_{1}$-cofiniteness is a sufficient condition for the existence of genus-zero chiral (logarithmic) 
conformal field theories. 
A consequence of the $C_{1}$-cofiniteness condition is that the genus-zero conformal blocks
are finite-dimensional since spaces of solutions of differential equations 
in finite-dimensional spaces are finite-dimensional. To go beyond 
conformal field theories with finite-dimensional genus-zero conformal blocks,
it is necessary to study vertex operator algebras and modules that do not 
satisfy the $C_{1}$-confiniteness condition. The Liouville conformal field theory 
is an important example  of such non-$C_{1}$-cofinite conformal field theories.
The mathematical construction of the Liouville conformal field theory by  
Kupiainen, Rhodes and Vargas in \cite{KRV} and by 
Guillarmou, Kupiainen, Rhodes and Vargas in \cite{GKRV1} and \cite{GKRV2}
uses the probability approach, which is heavily based 
on analysis in infinite-dimensional spaces. The chiral 
part of the Liouville conformal field theory can be used to study 
the representation theory of Virasoro vertex operator algebras and their modules. 
The author conjectures that we might be able to reformulate some part of 
the infinite-dimensional analysis 
used in the works \cite{KRV},
 \cite{GKRV1} and \cite{GKRV2} as a theory of 
differential equations in infinite-dimensional spaces. If such a theory  can be developed
in the future, non-$C_{1}$-cofinite modules for vertex operator algebras can also be studied 
using the representation theory of vertex operator algebras together with 
this to-be-developed theory of  differential equations in infinite-dimensional spaces. 

This paper is organized as follows: In the next section,
we generalize the logarithmic tensor category in \cite{HLZ0}--\cite{HLZ8}.
The assumptions for applying this generalization are given in this section.
In Section 3, we show that the arguments and proofs used to derive 
 in \cite{H-diff-eqn} the differential equations 
still works in the case that the generalized $V$-modules placed at $0$ and $\infty$
are quasi-finite-dimensional but not $C_{1}$-cofinite. 
We also recall some results of 
Miyamoto on $C_{1}$-cofinite $\N$-gradable $V$-modules 
and give a proof of a generalization of the inequality of Nahm in this section.
We verify the assumptions of our generalization in Section 2 for
a grading-restricted M\"{o}bius vertex algebra $V$ and 
 the category of $C_{1}$-cofinite grading-restricted $V$-modules 
in Section 4. In particular, we obtain the main result Theorem \ref{main} of the present paper. 

\paragraph{Acknowledgment} 
The author is grateful to Thomas Creutzig,  Robert McRae and Jinwei Yang
for helpful comments and discussions.  I am especially grateful to Robert McRae
for discussions on the inequality of Nahm.

\renewcommand{\theequation}{\thesection.\arabic{equation}}
\renewcommand{\thethm}{\thesection.\arabic{thm}}
\setcounter{equation}{0}
\setcounter{thm}{0}
\section{A generalization of the logarithmic tensor category}

In this section, we first review briefly the assumptions and 
results in the logarithmic tensor category theory 
developed in \cite{HLZ0}--\cite{HLZ8}. The construction and results 
in \cite{HLZ0}--\cite{HLZ8} work for a suitable 
vertex algebra $V$ much more general than 
a vertex operator algebra. In general, $V$ can be a 
strongly $A$-graded M\"{o}bius vertex algebra (see \cite{FHL} and \cite{HLZ1}
for the precise definition of M\"{o}bius vertex algebra), where $A$
is an abelian group. 
In this paper, we are interested only in the case that $A$ is trivial
and $V$ is a grading-restricted M\"{o}bius vertex algebra.  
So for simplicity, we do not discuss the general case that $A$ is not trivial. 
But it is clear that  the generalization 
obtained in this section still works in the general case. 

One of the main assumption in \cite{HLZ0}--\cite{HLZ8} is 
that the module category for $V$
should be closed under the contragredient functor. 
In this section, we show that the construction and results in 
\cite{HLZ0}--\cite{HLZ8} can be generalized to the case that the 
module category might not be closed under 
the contragredient functor. 

In this paper, we fix a  grading-restricted 
M\"{o}bius vertex algebra $V$. 
(Using the terminology in \cite{HLZ1}, $V$ is a  M\"{o}bius vertex algebra
strongly graded with respect to the weight-grading and 
the trivial abelian group grading.) So when we recall the material in 
\cite{HLZ0}--\cite{HLZ8}, the abelian groups $A$ and $\tilde{A}$ 
giving the horizontal gradings of $V$ and its modules, respectively,
are all trivial. 

Though we are mainly interested in grading-restricted generalized $V$-modules,
other types of $V$-modules always appear in our proofs and intermediate steps. 
This is an important feature of the representation theory of 
vertex  (operator) algebras. So we first briefly discuss our terminology 
for these different types of $V$-modules and their differences. For the precise definitions, see 
\cite{HLZ1} and \cite{H-finite-length}. 

A generalized $V$-module is a $\C$-graded vector 
space $W$ equipped with a vertex operator map
$Y_{W}: V\otimes W\to W[[x, x^{-1}]]$ and operators 
$L_{W}(-1)$, $L_{W}(0)$, $L_{W}(1)$ satisfying all the axioms
for a M\"{o}bius vertex algebra that still make sense.
Note that though $W$ has a $\C$-grading, it does not have to 
be lower bounded or grading restricted. 
A lower-bounded  generalized $V$-module is a generalized 
$V$-module $W=\coprod_{n\in \C}W_{[n]}$ such that
$W_{[n]}=0$ for $\Re(n)<N$ for some $N\in \Z$. 
A grading-restricted generalized $V$-module is 
a lower-bounded generalized $V$-module 
$W=\coprod_{n\in \C}W_{[n]}$ such that 
$\dim W_{[n]}<\infty$ for $n\in \C$. An ordinary $V$-module is a
grading-restricted generalized $V$-module such that $L(0)$ 
acts on the generalized $V$-module semisimply.
A quasi-finite-dimensional 
generalized $V$-module is a generalized $V$-module 
 $W=\coprod_{n\in \C}W_{[n]}$ such that 
$\dim \coprod_{\Re(n)<N}W_{[n]}<\infty$ for $N\in \Z$.

A weak $V$-module is a vector space $W$ and a vertex operator 
map $Y_{W}: V\otimes W\to W[[x, x^{-1}]]$ satisfying all the axioms,
including the Jacobi identity, for generalized $V$-modules except for those
involving the grading of $W$.  For a weak $V$-module $W$, 
an $\N$-grading $W=\coprod_{n\in \N}W_{\l n\r}$ on $W$ is said to be 
compatible if for $v\in V_{(m)}$ and $W_{\l n\r}$, 
$v_{k}w\in W_{\l m-k-1+n\r}$,
where $v_{n}=\res_{x}x^{n}Y_{W}(v, x)$. An 
$\N$-gradable weak $V$-module is a weak $V$-module 
for which there exists a compatible $\N$-grading. 

Intertwining operators among any types of $V$-modules above 
are well defined. Since $L(0)$ might act nonsemisimply on generalized $V$-modules,
they might contain logarithms of the variables. 
These are called logarithmic intertwining operators. For simplicity,
we shall simply call them intertwining operators also unless it is necessary to 
emphasize there are logarithms of the variables. 

Let $\mathcal{C}$ be a full subcategory of 
generalized $V$-modules.
The construction of the vertex tensor category and braided tensor category structures
in \cite{HLZ0}--\cite{HLZ8} are obtained based the following 
assumptions on $\mathcal{C}$
 (see Assumptions 10.1 in \cite{HLZ6}, Assumptions 12.1 and
12.2  in \cite{HLZ8}): 

\begin{enumerate}

\item Objects of $\mathcal{C}$ are grading-restricted. 

\item For any object $W$ in $\mathcal{C}$, the weights of homogeneous 
elements are real numbers and there exists $K\in \Z_{+}$ such that 
$L_{W}(0)_{N}^{K}=0$ where $L_{W}(0)_{N}$ is the nilpotent part of 
$L_{W}(0)$. 

\item $\mathcal{C}$ is closed under images, 
under the contragredient functor, under taking finite direct
sums, and under $P (z)$-tensor products $\boxtimes_{P(z)}$ 
for some $z\in \C^{\times}$.

\item $V$ is an object of $\mathcal{C}$.

\item  The convergence and expansion conditions
for intertwining maps in $\mathcal{C}$ hold.

\item The products of more than two
intertwining operators are absolutely convergent in the corresponding region
and can be analytically extended to multivalued analytic 
functions defined on the region where the complex variables 
in the intertwining operators 
are not equal $0$ and each other.

\end{enumerate}
It is proved in \cite{HLZ0}--\cite{HLZ8} that if 
Assumptions 
1--6 hold, then intertwining operators among objects of $\mathcal{C}$ 
satisfy the associativity property
and $\mathcal{C}$ has a natural vertex tensor category 
structure.
In particular, $\mathcal{C}$ has a natural braided tensor category structure with a twist.
 
Theorem 11.4 in \cite{HLZ7} states that Assumption 5 above 
holds if the following Assumptions
hold:

\begin{enumerate}
\setcounter{enumi}{6}

\item Every finitely-generated lower-bounded 
generalized $V$-module is an object
of $\mathcal{C}$. 

\item The convergence and extension property for either products or iterates 
holds in $\mathcal{C}$.

\end{enumerate}

By the properties of intertwining operators and 
Proposition 2.1 In \cite{H-applicability}, the first part of Assumption 2 above, that
the weights of homogeneous 
elements are real numbers, can be replaced by the assumption that 
for any object $W$ of $\mathcal{C}$, the weights of homogeneous 
elements of $W$ are congruent to finitely many complex numbers modulo $\Z$. 
So Assumption 2 can be replaced by the following weaker assumption:

\begin{enumerate}
\setcounter{enumi}{8}

\item For any object $W$ of $\mathcal{C}$, the weights of homogeneous 
elements of $W$ are congruent to finitely many complex numbers modulo $\Z$
and there exists $K\in \Z_{+}$ such that 
$L_{W}(0)_{N}^{K}=0$ where $L_{W}(0)_{N}$ is the nilpotent part of 
$L_{W}(0)$. 

\end{enumerate}

\begin{rema}
{\rm If an object of $\mathcal{C}$ satisfies Assumption 9, then certainly 
the contragredient of this object also satisfies Assumption 9. This fact is needed
in our generalization of \cite{HLZ0}--\cite{HLZ8}. For example, 
(9.139) in \cite{HLZ6} is proved using the fact that $W_{1}\hboxtr_{P(z)}W_{2}$ 
is an object of $\mathcal{C}$. In our generalization, $\mathcal{C}$ is closed under 
$P(z)$-tensor product, $W_{1}\hboxtr_{P(z)}W_{2}$ as the contragredient 
of the $P(z)$-tensor product is in general not an object of $\mathcal{C}$. 
But since its contragredient is an object of $\mathcal{C}$, $W_{1}\hboxtr_{P(z)}W_{2}$
still satisfies  Assumption 9.}
\end{rema}

Also in \cite{H-applicability}, it is proved that Assumption 7 above 
can be replaced by the following weaker assumption:

\begin{enumerate}
\setcounter{enumi}{9}

\item For any objects $W_1$ and $W_2$ of $\mathcal{C}$ 
and any $z \in \C^{\times}$, if the generalized $V$-module 
$W_{\lambda}$ generated by a generalized
eigenvector $\lambda\in (W_1 \otimes W_2)^{*}$ for 
$L_{P (z)}(0)$ satisfying the $P (z)$-compatibility condition is
lower bounded, then $W_{\lambda}$ is an object of $\mathcal{C}$.

\end{enumerate}

By  these results on the assumptions, we 
see that for a M\"{o}bius vertex algebra $V$ and  
a full subcategory of 
generalized $V$-modules $\mathcal{C}$ satisfying 
Assumptions 1, 3, 4, 6, 8, 9, and10,  associativity of intertwining operators 
among objects of $\mathcal{C}$ is satisfied and $\mathcal{C}$ has a natural 
vertex tensor category structure and thus a braided tensor 
category structure with a twist. 
But this result still cannot be applied to the case that 
$\mathcal{C}$ is the category 
of $C_{1}$-cofinite 
grading-restricted generalized $V$-modules because 
in general, the contragredient 
of a $C_{1}$-cofinite 
grading-restricted generalized $V$-module might not be 
$C_{1}$-cofinite.  Now we generalize the construction 
in \cite{HLZ0}--\cite{HLZ8} to the case that $\mathcal{C}$ 
might not be closed under the contragredient functor. 

We need to replace Assumption 3 by the following 
weaker assumption:

\begin{enumerate}
\setcounter{enumi}{10}

\item $\mathcal{C}$ is closed under images,  under taking finite direct
sums, and under $P (z)$-tensor products $\boxtimes_{P(z)}$ 
for some $z\in \C^{\times}$.

\end{enumerate}

Let $\mathcal{C}$ be a full subcategory of 
generalized $V$-modules satisfying 
Assumptions 1 and closed under images and finite direct sums. 
Even though  $\mathcal{C}$ might not be closed 
under the contragredient functor,
contragredients of objects of $\mathcal{C}$ are 
grading-restricted generalized $V$-modules. 
 
In this case, the definition of 
$W_{1}\hboxtr_{P(z)}W_{2}$ in the category $\mathcal{C}$
is in fact the same as the one in \cite{HLZ4} (see Definition 5.31). For the 
reader's convenience, we recall Definition 5.31 in \cite{HLZ4} here.

\begin{defn}\label{hboxtr}
{\rm For objects $W_{1}, W_{2}$ in $\mathcal{C}$, define the subset 
$$W_{1}\hboxtr_{P(z)}W_{2}\subset (W_{1}\otimes W_{2})^{*}$$
of $(W_{1}\otimes W_{2})^{*}$ to be the union of the images
$$I'(W')\subset (W_{1}\otimes W_{2})^{*}$$
as $(W; I)$ ranges through all the $P(z)$-products of $W_{1}$ and
$W_{2}$ with objects $W$ in $\mathcal{C}$. Equivalently,
$W_{1}\hboxtr_{P(z)}W_{2}$ is the union of the images $I'(W')$ as $W$
ranges through objects in $\mathcal{C}$ and $I'$ ranges through
the space of linear maps
$$W'\to (W_{1}\otimes W_{2})^{*}$$
intertwining the actions of 
$$V\otimes \iota_{+}\C[t, t^{-1}, (z^{-1}-t)^{-1}]$$
and $L(-1)$, $L(0)$ and $L(1)$ on both spaces.}
\end{defn}

\begin{rema}
{\rm Note that in Definition 5.31 in \cite{HLZ4},
we have the phrase ``as $W$
(or $W'$) ranges through $\mathrm{ob}\,\mathcal{C}$"
 in the second 
equivalent formulation of the definition. But in the definition above
we have deleted  ``(or $W'$)" because in this case, $W'$
might not be  an object of $\mathcal{C}$. }
\end{rema}

The first result in \cite{HLZ4} that needs to be generalized is 
Proposition 5.37. We also need to generalize  Proposition 5.36
in \cite{HLZ4}. But we need to use the generalization of 
Proposition 5.37 in \cite{HLZ4} to generalize
Proposition 5.36 in \cite{HLZ4}.

\begin{prop}\label{HLZ4-Prop5.37}
Assume that $\mathcal{C}$ satisfies Assumptions 1 and 
is closed under images and finite direct sums. 
Let $W_{1}, W_{2}$ be objects of $\mathcal{C}$.  If the contragredient 
of $W_1\hboxtr_{P(z)} W_2$ is an object of $\mathcal{C}$, then
the $P(z)$-tensor product of $W_{1}$ and $W_{2}$ in $\mathcal{C}$
exists and is
$(W_1\boxtimes_{P(z)} W_2,  i')$,
where $W_1\boxtimes_{P(z)} W_2 = (W_1\hboxtr_{P(z)} W_2)'$ and 
$i$ is
the natural inclusion {}from $W_1\hboxtr_{P(z)} W_2$ to $(W_1\otimes
W_2)^*$.  Conversely, if the $P(z)$-tensor
product of $W_1$ and $W_2$ in $\mathcal{C}$ exists, then the contragredient of 
$W_1\hboxtr_{P(z)} W_2$ is an object of $\mathcal{C}$.
\end{prop}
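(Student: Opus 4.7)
\emph{Plan.} The strategy is to follow the original proof of Proposition 5.37 in \cite{HLZ4} while isolating the one place where closure of $\mathcal{C}$ under the contragredient functor was used, and replacing it with the explicit hypothesis that $(W_1\hboxtr_{P(z)}W_2)'$ lies in $\mathcal{C}$. The starting observation, which is independent of any closure hypothesis on $\mathcal{C}$, is that $W_1\hboxtr_{P(z)}W_2$ carries a natural structure of a generalized $V$-module as a submodule of $(W_1\otimes W_2)^{*}$ equipped with the $P(z)$-actions of $V\otimes \iota_{+}\mathbb{C}[t, t^{-1}, (z^{-1}-t)^{-1}]$ together with $L(-1), L(0), L(1)$; moreover, for every $P(z)$-product $(W; I)$ in $\mathcal{C}$ the adjoint $I'\colon W'\to W_1\hboxtr_{P(z)}W_2$ is a morphism of $V$-modules. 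Both facts are established in \cite{HLZ4} prior to Proposition 5.37 and do not require any closure of $\mathcal{C}$.

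For the forward direction I would set $W_0 := (W_1\hboxtr_{P(z)}W_2)'$ and take the structure map to be the adjoint $i'$ of the natural inclusion $i\colon W_1\hboxtr_{P(z)}W_2\hookrightarrow (W_1\otimes W_2)^{*}$. Under the hypothesis that $W_0\in\mathcal{C}$, one verifies that $(W_0, i')$ is a $P(z)$-product: the compatibility identities for a $P(z)$-intertwining map for $i'$ are obtained by dualizing the $V$-module structure identities on $W_1\hboxtr_{P(z)}W_2$. Universality is then immediate from Definition \ref{hboxtr}: any $P(z)$-product $(W; I)$ in $\mathcal{C}$ yields an adjoint $I'\colon W'\to W_1\hboxtr_{P(z)}W_2$ by definition of $\hboxtr$, and taking contragredients produces a morphism $W_0\to W''\cong W$ through which $I$ factors via $i'$. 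Uniqueness follows from the usual density argument showing that a morphism out of a $P(z)$-tensor product candidate is determined by the action on the image of the intertwining map inside the completion $\overline{W_0}$.

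For the converse, assuming the $P(z)$-tensor product $(W_1\boxtimes_{P(z)}W_2,\,\boxtimes_{P(z)})$ exists in $\mathcal{C}$, the universal property forces every adjoint $I'$ appearing in Definition \ref{hboxtr} to factor through $(\boxtimes_{P(z)})'\colon (W_1\boxtimes_{P(z)}W_2)'\to (W_1\otimes W_2)^{*}$. Hence $W_1\hboxtr_{P(z)}W_2$ coincides with the image of this adjoint, and since the adjoint of an injective intertwining map into $(W_1\otimes W_2)^{*}$ is injective, the resulting identification $W_1\hboxtr_{P(z)}W_2\cong (W_1\boxtimes_{P(z)}W_2)'$ is an isomorphism of generalized $V$-modules. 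Taking contragredients and using that $W_1\boxtimes_{P(z)}W_2$ is grading-restricted (so its double contragredient is itself) gives $(W_1\hboxtr_{P(z)}W_2)'\cong W_1\boxtimes_{P(z)}W_2\in\mathcal{C}$.

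The main subtlety is to keep track of where the identifications $(W_1\hboxtr_{P(z)}W_2)''\cong W_1\hboxtr_{P(z)}W_2$ and $W''\cong W$ are legitimately available: the first one is precisely what the new hypothesis provides (since it guarantees that the contragredient in question is grading-restricted), while the second requires only Assumption 1 on objects of $\mathcal{C}$. Every other step of the HLZ4 argument consists of pointwise checks inside $(W_1\otimes W_2)^{*}$ that are insensitive to whether $\mathcal{C}$ is closed under the contragredient functor, so the proof goes through verbatim under the weaker hypotheses stated.
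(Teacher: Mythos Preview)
Your overall strategy matches the paper's exactly: the forward direction is the \cite{HLZ4} argument verbatim (the paper says so in one line), and for the converse both you and the paper first identify $W_1\hboxtr_{P(z)}W_2$ with the image $I_0'(W_0')$ of the adjoint of the universal intertwining map, then argue that $I_0'$ is injective, and finally pass to contragredients using grading-restrictedness and closure under images.

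There is one genuine gap in your converse argument: the clause ``since the adjoint of an injective intertwining map into $(W_1\otimes W_2)^{*}$ is injective'' does not justify the injectivity of $(\boxtimes_{P(z)})'\colon (W_1\boxtimes_{P(z)}W_2)'\to (W_1\otimes W_2)^{*}$. The intertwining map $\boxtimes_{P(z)}$ does not go into $(W_1\otimes W_2)^{*}$, and in any case injectivity of an intertwining map says nothing about injectivity of its adjoint. What is actually needed---and what the paper invokes explicitly---is the \emph{spanning} property of the tensor product intertwining map (Proposition 4.23 in \cite{HLZ3}, generalizing Lemma 4.9 in \cite{tensor4}): the homogeneous components of $I_0(w_1\otimes w_2)$ for $w_1\in W_1$, $w_2\in W_2$ span $W_1\boxtimes_{P(z)}W_2$. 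From this it follows immediately that if $I_0'(w_0')=0$ then $w_0'$ pairs to zero with every such homogeneous component, hence $w_0'=0$. You mentioned the density argument in your forward direction for uniqueness, so you clearly have the right tool in hand; you just need to invoke it here rather than the statement you wrote. Once this is fixed, your argument is the same as the paper's.
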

\begin{proof}
The proof of the first part is the same as in \cite{HLZ4}. 
For the converse, without using contragredients 
of objects of $\mathcal{C}$, the proof of Proposition 5.37 in \cite{HLZ4} 
shows that $W_{1}\hboxtr_{P(z)}W_{2}=I_{0}'(W_{0}')$, where 
$(W_{0}, I_{0})$ is the $P(z)$-tensor product of $W_{1}$ and $W_{2}$. 
By Proposition 4.23 in \cite{HLZ3} (which is a straightforward generalization of 
Lemma 4.9 in \cite{tensor4}),  we know that the homogeneous components of 
elements of $\overline{W}_{0}$ of the form $I_{0}(w_{1}\otimes w_{2})$
for $w_{1}\in W_{1}$ and $w_{2}\in W_{2}$ span $W_{0}$. Then $I_{0}'$ is injective. 
This means that as a $V$-module map from $W_{0}'$ to $I'(W_{0}')$, $I_{0}'$
is an equivalence. Hence its adjoint from the contragredient of $I_{0}'(W_{0}')$
to the contragredient $W_{0}''$ of $W_{0}'$ is also an equivalence. 
Since objects of $\mathcal{C}$ are grading-restricted, 
$W_{0}''$ is equivalent to $W_{0}$, which is an object of $\mathcal{C}$. 
Thus $W_{0}''$ and  the contragredient of $I'(W_{0}')$ are also 
objects of $\mathcal{C}$ since $\mathcal{C}$ is closed under images. 
Since $W_{1}\hboxtr_{P(z)}W_{2}=I_{0}'(W_{0}')$, we see that 
the contragredient of 
$W_1\hboxtr_{P(z)} W_2$ is  also an object of $\mathcal{C}$.
\end{proof}

We now assume that $\mathcal{C}$ satisfies Assumptions 1 and 11. 
In this case, since the $P(z)$-tensor
product of $W_1$ and $W_2$ in $\mathcal{C}$ exists,
by Proposition \ref{HLZ4-Prop5.37},  the contragredient 
$(W_1\hboxtr_{P(z)}W_2)'$ of  
$W_1\hboxtr_{P(z)}W_2$
is an object of $\mathcal{C}$. Then we let 
$W_{1}\boxtimes_{P(z)}W_{2}=(W_1\hboxtr_{P(z)}W_2)'$
and take the $P(z)$-tensor product
of $W_{1}$ and $W_{2}$ to be $(W_{1}\boxtimes_{P(z)}W_{2}, i')$,
where $i: W_1\hboxtr_{P(z)}W_2\to (W_{1}\otimes W_{2})^*$
is the inclusion map and $i': W_{1}\otimes W_{2}\to W_{1}\boxtimes_{P(z)}W_{2}$
is the $P(z)$-intertwining map corresponding to $i$. 

We are ready to generalize 
Proposition 5.36 \cite{HLZ4} now. The proof of  Proposition 5.36 in \cite{HLZ4} 
uses the assumption that the category there is closed under 
contragredient functor. Since $\mathcal{C}$ might not be closed under 
the contragredient functor, we need to use the converse part 
of Proposition \ref{HLZ4-Prop5.37},
which in  turn uses the assumption that 
$\mathcal{C}$ is closed under 
$P (z)$-tensor products $\boxtimes_{P(z)}$ 
for some $z\in \C^{\times}$.

\begin{prop}\label{backslash=union}
Assume that $\mathcal{C}$ satisfies Assumptions 1 and  11. 
Let $W_1,W_2$ be objects of $\mathcal{C}$.  Then the subspace 
$W_1\hboxtr_{P(z)}
W_2$ of $(W_{1}\otimes W_{2})^{*}$ is equal to the union and also to
the sum of grading-restricted generalized $V$-modules contained in 
$(W_{1}\otimes
W_{2})^{*}$ (equipped with the action $Y'_{P(z)}(\cdot,x)$ of $V$ and the
corresponding actions of $L(-1)$, $L(0)$ and $L(1)$ on
$(W_{1}\otimes W_{2})^{*}$)
 such that their contragredients are objects of $\mathcal{C}$.
\end{prop}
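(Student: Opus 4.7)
The plan is to show that $W_{1}\hboxtr_{P(z)}W_{2}$ is itself one of the submodules appearing in the stated union, which then forces the union, the sum, and $W_{1}\hboxtr_{P(z)}W_{2}$ all to coincide. This bypasses the argument in \cite{HLZ4}, which relied on closure under contragredients that we no longer have.

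For the containment of the union (and hence the sum) in $W_{1}\hboxtr_{P(z)}W_{2}$, I would start from an arbitrary grading-restricted generalized $V$-submodule $M \subseteq (W_{1}\otimes W_{2})^{*}$ (with the $P(z)$-action $Y'_{P(z)}$) such that $M' \in \mathcal{C}$, and construct a $P(z)$-product of $W_{1}$ and $W_{2}$ whose associated image is exactly $M$. The natural candidate is $W = M' \in \mathcal{C}$: composing the canonical isomorphism $W' = M'' \cong M$ (valid because $M$ is grading-restricted) with the inclusion $M \hookrightarrow (W_{1}\otimes W_{2})^{*}$ produces a $V$-module map $W' \to (W_{1}\otimes W_{2})^{*}$ intertwining the $V$-action with the $P(z)$-action and the actions of $L(-1), L(0), L(1)$. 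By the standard adjunction used in Definition \ref{hboxtr}, this map corresponds to a $P(z)$-intertwining map $I\colon W_{1}\otimes W_{2} \to \overline{W}$, so $(W, I)$ is a $P(z)$-product with $W \in \mathcal{C}$, and by construction $I'(W') = M$. Hence $M \subseteq W_{1}\hboxtr_{P(z)}W_{2}$.

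For the reverse containment, I would invoke the converse part of Proposition \ref{HLZ4-Prop5.37}. Assumption 11 guarantees the existence of the $P(z)$-tensor product of $W_{1}$ and $W_{2}$ in $\mathcal{C}$, so that proposition yields that the contragredient of $W_{1}\hboxtr_{P(z)}W_{2}$ lies in $\mathcal{C}$. Since contragredients of grading-restricted generalized $V$-modules are again grading-restricted, $W_{1}\hboxtr_{P(z)}W_{2}$ is itself a grading-restricted generalized $V$-submodule of $(W_{1}\otimes W_{2})^{*}$ whose contragredient lies in $\mathcal{C}$. Therefore $W_{1}\hboxtr_{P(z)}W_{2}$ is one of the submodules indexed in the statement, and in particular it is contained in the union (and hence also in the sum). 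Combined with the previous paragraph, the union, the sum, and $W_{1}\hboxtr_{P(z)}W_{2}$ all agree.

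The step I expect to be most delicate is the recognition that $W_{1}\hboxtr_{P(z)}W_{2}$ is itself a submodule of the required type: a priori it is only a union of images $I'(W')$, and the fact that this union is already closed under the $P(z)$-action and has contragredient in $\mathcal{C}$ is not obvious from Definition \ref{hboxtr}. This rests essentially on the converse direction of Proposition \ref{HLZ4-Prop5.37}, whose own proof used Proposition 4.23 of \cite{HLZ3} to force injectivity of the adjoint map and then closure of $\mathcal{C}$ under images; without Assumption 11 we would lose this, and the approach of \cite{HLZ4}, which simply dualized an object assumed to be in $\mathcal{C}$, is no longer available.
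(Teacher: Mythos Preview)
Your argument is correct and rests on the same key input as the paper---the converse direction of Proposition~\ref{HLZ4-Prop5.37}---but you package the inclusion $W_{1}\hboxtr_{P(z)}W_{2}\subseteq$ (union) differently. The paper shows that \emph{each} image $I'(W')$ appearing in Definition~\ref{hboxtr} has contragredient in~$\mathcal{C}$: it embeds $I'(W')$ into $W_{1}\hboxtr_{P(z)}W_{2}$, dualizes to obtain a surjection $W_{1}\boxtimes_{P(z)}W_{2}\twoheadrightarrow (I'(W'))'$, and invokes closure under images. You instead observe that $W_{1}\hboxtr_{P(z)}W_{2}$ is itself a single term in the union, which is more economical for the proposition as stated. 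The paper's finer statement---that each $I'(W')$ already has contragredient in~$\mathcal{C}$---is what is actually quoted later in the proof of Theorem~\ref{characterization}; your version recovers it immediately by the same dualize-and-take-images step, so nothing is lost.

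One phrasing issue: the sentence ``Since contragredients of grading-restricted generalized $V$-modules are again grading-restricted'' is the wrong direction. What you need is that if $W'$ is grading-restricted then so is $W$, which follows because $(W')_{[n]}=(W_{[n]})^{*}$ forces each $W_{[n]}$ to be finite-dimensional and the lower bound to transfer. Alternatively, the proof of Proposition~\ref{HLZ4-Prop5.37} already identifies $W_{1}\hboxtr_{P(z)}W_{2}$ with $I_{0}'(W_{0}')$ for the tensor product $(W_{0},I_{0})$, which gives grading-restriction directly.
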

\begin{proof}
The only difference between  Proposition 5.36 in \cite{HLZ4} 
and this proposition is that here the sum is over grading-restricted 
generalized $V$-modules which might not be objects of $\mathcal{C}$,
but whose contragredients are objects of $\mathcal{C}$. 
Since by Assumption 1, an object $W$ of $\mathcal{C}$
is grading-restricted,  $W'$ is also grading-restricted. 
Then for an object $W$ of $\mathcal{C}$ and  a $P(z)$-intertwining map $I$
of type $\binom{W}{W_{1}W_{2}}$, $I'(W')$ is also grading restricted. 
So the only thing we need to prove is that the contragredient of 
$I'(W')$ is an object of $\mathcal{C}$. 

Since $I'(W')$ is a generalized $V$-submodule of $W_{1}\hboxtr_{P(z)}$,
we have the inclusion map $i$ from $I'(W')$ to $W_{1}\hboxtr_{P(z)}W_{2}$.
The inclusion map $i$ is an injective $V$-module map. By Proposition \ref{HLZ4-Prop5.37},
$W_{1}\hboxtr_{P(z)}W_{2}$ is grading-restricted. Then the adjoint $i'$ 
of $i$ is a surjective $V$-module map from 
$W_{1}\boxtimes_{P(z)}W_{2}= (W_{1}\hboxtr_{P(z)}W_{2})'$
to the contragredient of 
$I'(W')$. Since $\mathcal{C}$ is closed under images, 
the contragredient of 
$I'(W')$ is also an object of $\mathcal{C}$.
\end{proof}

In the work \cite{HLZ4}, one crucial result is 
that elements of $W_{1}\hboxtr_{P(z)}W_{2}$ can be characterized by 
two conditions called the $P(z)$-compatibility condition and 
$P(z)$-local grading restriction condition. See Section 5 in \cite{HLZ4} for 
these conditions and Theorem 5.50 in 
\cite{HLZ4} for the theorem. But the condition of Theorem 
5.50 in \cite{HLZ4} is not satisfied for our category $\mathcal{C}$, which is not 
closed under the contragredient functor. In fact, 
the condition of Theorem 
5.50 in \cite{HLZ4} states that every element of $(W_{1}\otimes W_{2})^{*}$
satisfies the $P(z)$-compatibility condition and the 
$P(z)$-local grading-restriction condition must contain in
a generalized $V$-submodule of some object of $\mathcal{C}$.
But for an object $W$ of $\mathcal{C}$ and a $P(z)$-intertwining map 
$I$ of type $\binom{W}{W_{1}W_{2}}$, we obtain an 
element $I'(w')\in (W_{1}\otimes W_{2})^{*}$ for each $w'\in W'$ satisfying 
both the $P(z)$-compatibility condition and the 
$P(z)$-local grading-restriction condition. But since $W'$ might not be an 
object of $\mathcal{C}$, $I'(w')$ also might not 
be in some object of $\mathcal{C}$. So we need to generalize
Theorem 5.50 in \cite{HLZ4}.

\begin{thm}\label{characterization}
Suppose that for every element 
$\lambda
\in (W_1\otimes W_2)^*$ satisfying both the 
$P(z)$-compatibility condition and the $P(z)$-local grading
restriction condition,
the generalized module $W_{\lambda}$ generated by $\lambda$ 
as given in Theorem 5.49 in \cite{HLZ4}
 is a generalized $V$-submodule of a grading-restricted generalized 
$V$-module in $(W_{1}\otimes W_{2})^{*}$ whose 
contragredient is an object of
$\mathcal{C}$. Then
$W_1\hboxtr_{P(z)}W_2$ is equal to the space of all such $\lambda$. 
\end{thm}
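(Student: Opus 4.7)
The plan is to follow the structure of the proof of Theorem 5.50 in \cite{HLZ4}, but to replace each appeal to Proposition 5.36 of \cite{HLZ4} with its generalization Proposition \ref{backslash=union} above, and to use the explicit hypothesis on $W_{\lambda}$ in place of the contragredient-closure of $\mathcal{C}$. There are two inclusions to establish.

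First I would show the easy direction: every $\lambda \in W_{1}\hboxtr_{P(z)} W_{2}$ satisfies both the $P(z)$-compatibility condition and the $P(z)$-local grading restriction condition. By the definition of $W_{1}\hboxtr_{P(z)} W_{2}$ recalled in Definition \ref{hboxtr}, such a $\lambda$ lies in the image $I'(W')$ for some $P(z)$-product $(W, I)$ of $W_{1}$ and $W_{2}$ with $W$ an object of $\mathcal{C}$. The verification that $I'(w')$ satisfies both conditions for every $w' \in W'$ is purely formal: it uses only the intertwining properties of $I$, the definitions of $Y'_{P(z)}$ and of the actions of $L(-1), L(0), L(1)$ on $(W_{1}\otimes W_{2})^*$, together with the fact that $W$ (and hence $W'$) is grading-restricted by Assumption 1. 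This argument is exactly the one given in \cite{HLZ4} and needs no modification, since it never references contragredients of objects of $\mathcal{C}$.

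For the reverse direction, suppose $\lambda \in (W_{1}\otimes W_{2})^*$ satisfies both conditions. By Theorem 5.49 of \cite{HLZ4}, there is a doubly graded generalized $V$-module $W_{\lambda} \subset (W_{1}\otimes W_{2})^*$ containing $\lambda$, generated in the sense described there. By the hypothesis of the theorem, $W_{\lambda}$ is contained in some grading-restricted generalized $V$-module $M \subset (W_{1}\otimes W_{2})^*$ whose contragredient $M'$ is an object of $\mathcal{C}$. Proposition \ref{backslash=union} now applies directly: it identifies $W_{1}\hboxtr_{P(z)} W_{2}$ with the union (equivalently, the sum) of all grading-restricted generalized $V$-submodules of $(W_{1}\otimes W_{2})^*$ whose contragredients are objects of $\mathcal{C}$. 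Hence $M \subset W_{1}\hboxtr_{P(z)} W_{2}$, and in particular $\lambda \in W_{\lambda} \subset M \subset W_{1}\hboxtr_{P(z)} W_{2}$.

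The main obstacle I anticipate is not in either inclusion itself but in checking that nothing in the original proof of Theorem 5.50 secretly uses that $W_{\lambda}$ is an object of $\mathcal{C}$, as opposed to merely a submodule of a grading-restricted module whose contragredient is in $\mathcal{C}$. The careful step is the appeal to Proposition \ref{backslash=union}: it is crucial that this generalization only requires the contragredient of the ambient module to be in $\mathcal{C}$, not the module itself. Since Proposition \ref{backslash=union} was formulated with precisely this weakening in mind, the whole argument goes through; the work has already been done in proving Propositions \ref{HLZ4-Prop5.37} and \ref{backslash=union}, and the present theorem is essentially a corollary of them combined with Theorem 5.49 of \cite{HLZ4}.
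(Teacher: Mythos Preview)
Your proposal is correct and follows essentially the same approach as the paper. Both arguments establish the two inclusions using Proposition \ref{backslash=union} for the nontrivial direction and the formal calculations from \cite{HLZ4} for the direction $W_{1}\hboxtr_{P(z)}W_{2}\subset W_{3}$; the paper additionally verifies (redundantly, given the theorem's hypothesis) that for $\lambda=I'(w')$ the module $W_{\lambda}$ sits inside $I'(W')$ with $(I'(W'))'\in\mathcal{C}$, but this is not needed to conclude $I'(w')\in W_{3}$.
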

\begin{proof}
We denote the space of all such $\lambda$ by $W_{3}$. 
We need to prove that $W_1\hboxtr_{P(z)}W_2=W_{3}$.
We prove $W_1\hboxtr_{P(z)}W_2\subset W_{3}$ and 
$W_{3}\subset W_1\hboxtr_{P(z)}W_2$. 

From the calculations and discussions
before the statements of $P(z)$-compatibility condition and 
the $P(z)$-local grading
restriction condition in
Section 5 of \cite{HLZ4}, we know that 
for an object $W$ of $\mathcal{C}$,
a $P(z)$-intertwining map of type $\binom{W}{W_{1}W_{2}}$ 
and an element $w'\in W'$, the element $I'(w')$
of $(W_{1}\otimes W_{2})^{*}$ satisfies 
these two conditions. Also, the generalized $V$-module 
$W_{I(w')}$ is a generalized $V$-submodule of the 
grading-restricted generalized $V$-module $I'(W')$.
By Proposition \ref{backslash=union}, the contragredient of 
$I'(W')$ is an object of $\mathcal{C}$. We have proved  $I(w')\in W_{3}$.
Thus we have $W_1\hboxtr_{P(z)}W_2\subset W_{3}$.

Let $\lambda\in W_{3}$. By Theorem 5.49 in \cite{HLZ4}, 
$W_{\lambda}\subset (W_{1}\otimes W_{2})^{*}$ is a
grading-restricted generalized $V$-module. By assumption, 
$W_{\lambda}$ is the generalized $V$-submodule  of 
a grading-restricted generalized 
$V$-module $W$ in $(W_{1}\otimes W_{2})^{*}$ whose 
contragredient is an object of
$\mathcal{C}$. In particular, $\lambda\in W$. Since 
the contragredient of $W$ is an object of $\mathcal{C}$, 
by Proposition 
\ref{backslash=union}, $W\subset W_1\hboxtr_{P(z)}W_2$.
In particular, $\lambda\in W_1\hboxtr_{P(z)}W_2$. 
Thus we have $W_{3}\subset W_1\hboxtr_{P(z)}W_2$. 
\end{proof}

Another important result that we need to generalize is Theorem 3.1 
in \cite{H-applicability}, which in turn is a generalization of 
Theorem 11.4 in \cite{HLZ7}. To generalize this result, we first need to 
reformulate Condition 1 in this result, that is, Assumption 10
above. The following assumption is what we use to replace 
Assumption 10:

\begin{enumerate}
\setcounter{enumi}{11}

\item For any objects $W_1$ and $W_2$ of $\mathcal{C}$ 
and any $z \in \C^{\times}$, if the generalized $V$-module 
$W_{\lambda}$ generated by a generalized
eigenvector $\lambda\in (W_1 \otimes W_2)^{*}$ for 
$L_{P (z)}'(0)$ satisfying the $P (z)$-compatibility condition is
lower bounded, then $W_{\lambda}$ is grading-restricted and its 
contragredient $W_{\lambda}'$ is an object of $\mathcal{C}$.
\end{enumerate}

\begin{thm}\label{expansion-cond}
Suppose that in addition to Assumptions 1 and  11, Assumption 
8, 9 and 12 also hold. 
Then the convergence and expansion conditions for intertwining maps in
$\mathcal{C}$ both hold.
\end{thm}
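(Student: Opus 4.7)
The plan is to follow the structure of the proof of Theorem 11.4 in \cite{HLZ7} (equivalently, Theorem 3.1 in \cite{H-applicability}), making exactly those modifications needed to accommodate the fact that $\mathcal{C}$ is no longer assumed closed under contragredients. The convergence condition for an intertwining map in $\mathcal{C}$ follows directly from Assumption 8 together with the standard translation between $P(z)$-intertwining maps and (logarithmic) intertwining operators, so essentially all the work is in the expansion condition.

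To prove the expansion condition, I would take objects $W_1, W_2$ of $\mathcal{C}$ and consider an element $\lambda \in (W_1 \otimes W_2)^{*}$ obtained from an iterate of intertwining maps (of a type relevant to the expansion condition). First I would verify that $\lambda$ satisfies the $P(z)$-compatibility condition; this is a formal consequence of the Jacobi identity for intertwining operators and is identical to the argument in \cite{HLZ7}. Second, I would check the $P(z)$-local grading restriction condition. Here the original argument uses the convergence and extension property from Assumption 8 to bound the generalized $L_{P(z)}'(0)$-weights appearing in the smallest quasi-module containing $\lambda$, and Assumption 9 (together with the replacement of Assumption 2 discussed above) to ensure that the weights of homogeneous elements live in finitely many cosets of $\Z$, so that lower-boundedness on each generalized weight subspace yields grading restriction.

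The key departure from the original proof occurs at the step where \cite{HLZ7} invokes Assumption 10 (or its refinement Assumption 7) to conclude that the generalized $V$-module $W_\lambda$ generated by $\lambda$ lies in $\mathcal{C}$. In our setting $W_\lambda$ need not be an object of $\mathcal{C}$. Instead I would appeal to the new Assumption 12: having shown that $W_\lambda$ is lower bounded (using Assumption 8 and 9 as in the original argument), Assumption 12 ensures that $W_\lambda$ is grading-restricted and that its contragredient $W_\lambda'$ is an object of $\mathcal{C}$. At this point Theorem \ref{characterization} applies with $W = W_\lambda$ itself, yielding $\lambda \in W_1 \hboxtr_{P(z)} W_2$. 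This inclusion is precisely the content of the expansion condition, translated into the language of intertwining maps and the $\hboxtr_{P(z)}$ construction of Definition \ref{hboxtr}.

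The main obstacle is the last replacement: in the proof in \cite{HLZ7}, membership in $\mathcal{C}$ was used not only to conclude $\lambda \in W_1 \hboxtr_{P(z)} W_2$ but implicitly to apply results (such as Proposition 5.36 of \cite{HLZ4}) describing $W_1 \hboxtr_{P(z)} W_2$ as a union of submodules in $\mathcal{C}$. One has to check that the generalized versions Proposition \ref{backslash=union} and Theorem \ref{characterization} established above are sufficient to carry the argument through; this is exactly what we arranged those propositions to deliver, since they characterize $W_1 \hboxtr_{P(z)} W_2$ in terms of submodules whose contragredients (rather than the submodules themselves) lie in $\mathcal{C}$. With these substitutions in place, the remaining steps of the proof of Theorem 11.4 in \cite{HLZ7} go through verbatim, establishing both the convergence and the expansion conditions for intertwining maps in $\mathcal{C}$.
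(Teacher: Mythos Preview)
Your proposal is correct and follows essentially the same approach as the paper: reduce to the proof of Theorem 11.4 in \cite{HLZ7}, observe that convergence comes straight from Assumption 8, and at the one place where the original argument uses Assumption 7/10 to put $W_{\lambda}$ (the paper's $W_{\lambda_n^{(2)}(w'_{(4)},w_{(3)})}$) into $\mathcal{C}$, invoke Assumption 12 instead to conclude that $W_{\lambda}$ is grading-restricted with contragredient in $\mathcal{C}$. The only cosmetic discrepancy is your final packaging: the paper does not invoke Theorem \ref{characterization} or phrase the conclusion as $\lambda\in W_1\hboxtr_{P(z)}W_2$, but simply records that the grading-restrictedness of each $W_{\lambda_n^{(2)}(w'_{(4)},w_{(3)})}$ is exactly the $P^{(2)}(z_1-z_2)$-local grading-restriction condition for $(I_1\circ(1_{W_2}\otimes I_2))'(w'_{(4)})\in (W_1\otimes W_2\otimes W_3)^*$, which is what the expansion condition asks for.
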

\begin{proof}
This proof is based on the proof of 
Theorem 11.4 in \cite{HLZ7}. The reader should read 
that proof there to see what is modified and generalized here. 

Note that the convergence condition 
for intertwining maps in the category $\mathcal{C}$
holds since Assumption 8 in particular gives this condition. 

As mentioned in the proof of Theorem 3.1 
in \cite{H-applicability}, the condition in Theorem 11.4 in \cite{HLZ7}
that every finitely-generated lower 
bounded generalized $V$-module is in $\mathcal{C}$ 
(Condition 1 in that theorem)  is only used  in the last paragraph 
showing that $W_{\lambda_{n}^{(2)}(w'_{(4)}, w_{(3)})}$ is an object of  $\mathcal{C}$.
In our case, $W_{\lambda_{n}^{(2)}(w'_{(4)}, w_{(3)})}$ might not 
be an object of $\mathcal{C}$. Instead, we need to show that the contragredient 
of $W_{\lambda_{n}^{(2)}(w'_{(4)}, w_{(3)})}$ is an object of $\mathcal{C}$. 
But it is proved in the proof of Theorem 11.4 in \cite{HLZ7}
that $\lambda_{n}^{(2)}(w'_{(4)}, w_{(3)})$
is a generalized eigenvector of $L'_{P(z)}(0)$ with eigenvalue $n$ and,
by using Theorem 9.17 in \cite{HLZ6}, that $\lambda_{n}^{(2)}(w'_{(4)}, w_{(3)})$
satisfies the $P(z)$-compatibility condition. It is also shown in the 
proof of  Theorem 11.4 in \cite{HLZ7} that 
$W_{\lambda_{n}^{(2)}(w'_{(4)}, w_{(3)})}$ is lower-bounded. 
Then by Assumption 12, $W_{\lambda_{n}^{(2)}(w'_{(4)}, w_{(3)})}$
is grading-restricted and its 
contragredient  is an object of $\mathcal{C}$. But an object of 
$\mathcal{C}$ is grading-restricted and its contragredient is also grading-restricted. 
So $W_{\lambda_{n}^{(2)}(w'_{(4)}, w_{(3)})}$ is grading-restricted. 
This shows that $I_{1}\circ
(1_{W_{2}}\otimes I_{2}))'(w'_{(4)})$ of $(W_{1}\otimes W_{2}\otimes
W_{3})^{*}$ satisfies the $P^{(2)}(z_{1}-z_{2})$-local
grading-restriction condition. Thus the expansion condition
for intertwining maps in the category $\mathcal{C}$ holds. 
\end{proof}

Theorem \ref{expansion-cond} in fact says that Assumptions 1, 
8, 9, 11, and 12 implies Assumption 5.

Besides the generalizations above of the corresponding results in 
\cite{HLZ5}, we also need to modify all the statements in \cite{HLZ4}--\cite{HLZ8} involving 
statements on the contragredients of objects of $\mathcal{C}$, especially the 
statements on $W_{1}\hboxtr_{P(z)}W_{2}$ and on generalized $V$-modules
in $(W_{1}\otimes W_{2})^{*}$.
Even under the assumption that the $P(z)$-tensor product of objects $W_{1}$
and $W_{2}$ of $\mathcal{C}$ exist, $W_{1}\hboxtr_{P(z)}W_{2}$
and its generalized $V$-submodules 
are in general not objects of $\mathcal{C}$. So when we see 
a statement saying that $W_{1}\hboxtr_{P(z)}W_{2}$ or a generalized $V$-module 
in $(W_{1}\otimes W_{2})^{*}$ is an object of $\mathcal{C}$, we need to change 
the statement to say that $W_{1}\hboxtr_{P(z)}W_{2}$ or a generalized $V$-module 
in $(W_{1}\otimes W_{2})^{*}$ is a grading-restricted generalized $V$-module
whose contragredient is an object of $\mathcal{C}$. The proofs are completely the same except that 
we use the generalizations above instead of the corresponding results in \cite{HLZ4}.
In addition to the generalizations above of the results related to $P(z)$-tensor products
in \cite{HLZ4}--\cite{HLZ8},
we also need to generalize of the results on $Q(z)$-tensor products in \cite{HLZ4}. 
But since these generalizations are completely the same as those we have given above
for $P(z)$-tensor products,
we omit the detailed discussions here. 

More specifically, the proofs of the main results in \cite{HLZ6} use Propositions 5.36, 5.37 
and Theorem  5.50 in \cite{HLZ4}. Since we have generalized these results to 
Propositions \ref{backslash=union}, \ref{HLZ4-Prop5.37} and 
Theorem \ref{characterization}, the corresponding statements 
of some main results in \cite{HLZ6}  also need to be generalized as discussed above.
As mentioned above, 
the proofs of these generalizations are completely the same as the 
proofs of the corresponding results in \cite{HLZ6} except for the 
corresponding statements about the
objects of $\mathcal{C}$.  So here we do not give the 
full statements and the proofs of these generalizations.  
In fact, only one phrase needs to be changed in all these results. 
Here is a list of results in \cite{HLZ6} and the change of one phrase needed 
to generalize these results on this list: 
In the statements of Propositions 9.13, Remark 9.20, Corollary 9.21, 
Theorem 9.23, Corollary 9.24, Theorem 9.27, and the proof of Theorem 10.3 in
\cite{HLZ6}, replace the phrase ``a generalized
$V$-submodule (or a $V$-submodule) of some object of $\mathcal{C}$" by
``a generalized $V$-submodule (or a $V$-submodule) of the contragredient of 
some object of $\mathcal{C}$."

Using the generalizations above, we see that the main results 
in  \cite{HLZ6} and \cite{HLZ8} still hold. In summary,  we obtain 
the following main theorem of this section:

\begin{thm}\label{1478910-vtc}
Let $V$ be a  M\"{o}bius vertex algebra and $\mathcal{C}$ 
a full subcategory of 
generalized $V$-modules. Assume that Assumptions 
1, 4,  6, 8, 9, 11, and 12 above hold. 
Then the associativity of intertwining operators among objects of $\mathcal{C}$
holds and
$\mathcal{C}$ has a natural vertex tensor category structure.
In particular, $\mathcal{C}$ has a natural  braided tensor category structure with a twist.
\end{thm}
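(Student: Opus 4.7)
The plan is to assemble the theorem from the generalizations already established in this section, treating it essentially as a bookkeeping statement once the three key technical results (Propositions \ref{HLZ4-Prop5.37}, \ref{backslash=union} and Theorem \ref{characterization}) and the convergence/expansion result (Theorem \ref{expansion-cond}) are in hand. First I would apply Theorem \ref{expansion-cond}: under Assumptions 1, 8, 9, 11 and 12, the convergence and expansion conditions for intertwining maps in $\mathcal{C}$ hold, so Assumption 5 is available. Combined with the hypotheses of the theorem, this gives us Assumptions 1, 4, 5, 6, 9 and 11 — exactly the list of assumptions used in \cite{HLZ6} and \cite{HLZ8} with the original closure of $\mathcal{C}$ under the contragredient functor replaced by Assumption 11 and the original real-weight Assumption 2 replaced by Assumption 9.

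Next I would run through the constructions in \cite{HLZ4}--\cite{HLZ8} of the $P(z)$-tensor product bifunctor, the natural associativity and braiding isomorphisms, and the vertex/braided tensor category structures, keeping track of the two places where closure under contragredient was previously invoked. All such uses are mediated through Propositions 5.36 and 5.37 and Theorem 5.50 of \cite{HLZ4}, which have just been upgraded to Propositions \ref{backslash=union}, \ref{HLZ4-Prop5.37} and Theorem \ref{characterization}; in each upgrade the statement "$W_{1}\hboxtr_{P(z)}W_{2}$ (or a generalized $V$-submodule of $(W_{1}\otimes W_{2})^{*}$) is an object of $\mathcal{C}$" is systematically replaced by "its contragredient is an object of $\mathcal{C}$." By Assumption 11 together with the converse part of Proposition \ref{HLZ4-Prop5.37}, the contragredient of $W_{1}\hboxtr_{P(z)}W_{2}$ is indeed always in $\mathcal{C}$, so $W_{1}\boxtimes_{P(z)}W_{2}$ is well-defined and lies in $\mathcal{C}$ as required.

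With these ingredients, the remaining arguments in \cite{HLZ6} and \cite{HLZ8} go through verbatim modulo the single phrase substitution already listed preceding the theorem (in Proposition 9.13, Remark 9.20, Corollary 9.21, Theorem 9.23, Corollary 9.24, Theorem 9.27 and the proof of Theorem 10.3 of \cite{HLZ6}), where "a generalized $V$-submodule of some object of $\mathcal{C}$" is replaced by "a generalized $V$-submodule of the contragredient of some object of $\mathcal{C}$." The associativity of (logarithmic) intertwining operators is then obtained as in \cite{HLZ6} from the convergence and expansion conditions together with Theorem \ref{characterization}; the vertex tensor category structure is obtained as in \cite{HLZ8}; and the braided tensor category structure with a twist follows by specializing the vertex tensor category structure to a single complex variable, exactly as in \cite{HLZ8}.

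The main obstacle — and the reason the preceding three results were needed — is precisely that $\mathcal{C}$ need not be closed under the contragredient functor, so every step in \cite{HLZ4}--\cite{HLZ8} that secretly relied on sending an object of $\mathcal{C}$ to its contragredient and still landing in $\mathcal{C}$ must be rerouted through the weaker statement that $\mathcal{C}$ is closed under $P(z)$-tensor products while the relevant auxiliary modules (such as $W_{1}\hboxtr_{P(z)}W_{2}$ and the modules $W_{\lambda}$ appearing in the expansion condition) have contragredients in $\mathcal{C}$. Once this rerouting has been carried out, as it has been in Propositions \ref{HLZ4-Prop5.37}, \ref{backslash=union} and Theorems \ref{characterization}, \ref{expansion-cond}, no genuinely new difficulty remains, and the theorem follows.
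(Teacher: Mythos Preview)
Your proposal is correct and follows essentially the same approach as the paper: the theorem is presented there as a summary statement after the preceding discussion, with no separate formal proof, and your write-up faithfully recapitulates that discussion (invoking Theorem \ref{expansion-cond} to obtain Assumption 5, then running the constructions of \cite{HLZ4}--\cite{HLZ8} with Propositions \ref{HLZ4-Prop5.37}, \ref{backslash=union} and Theorem \ref{characterization} substituted in, together with the listed phrase changes). There is nothing to add.
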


\begin{rema}
{\rm For the definition 
of vertex tensor category based on spheres with punctures and 
local coordinates vanishing at punctures,
see \cite{HL}. If $V$ is a vertex operator algebra, a vertex tensor category structure 
is essentially constructed in \cite{HLZ8} but is not 
stated explicitly. The tensor product bifunctors associated to 
conformal classes of general spheres with three punctures and local coordinates
and the corresponding associated  isomorphisms, commutativity isomorphisms
and so on can be obtained using the $P(z)$-tensor product bifunctors 
and the corresponding associativity isomorphisms,
commutativity isomorphisms and so on given in \cite{HLZ8} and 
the work \cite{H-geobk}. The substitution isomorphisms associated to 
conformal equivalence classes of spheres with two punctures and local coordinates 
can be constructed directly using the work \cite{H-geobk}. 
But for a grading-restricted M\"{o}bius vertex algebra,
since there are no Virasoro algebra actions, 
we have only functors and isomorphisms associated to 
conformal classes of  sphere with punctures and $SL(2, \C)$ 
local coordinates vanishing 
at punctures. Such a vertex tensor category 
can be called
a M\"{o}bius vertex tensor category. 
The M\"{o}bius vertex tensor category can be obtained easily using
the results in \cite{HLZ8}. }
\end{rema}

We shall use Theorem \ref{1478910-vtc} to prove Theorem \ref{main}.

\renewcommand{\theequation}{\thesection.\arabic{equation}}
\renewcommand{\thethm}{\thesection.\arabic{thm}}
\setcounter{equation}{0}
\setcounter{thm}{0}
\section{$C_{1}$-cofiniteness and intertwining operators}

To apply Theorem \ref{1478910-vtc} to the case that 
$\mathcal{C}$ is the category of $C_{1}$-cofinite grading-restricted 
generalized $V$-modules, we need to verify 
Assumptions 1, 4,  6, 8, 9, 11, and 12. To verify Assumptions 6 and 8,
we need to generalize the results on the differential equations derived by the author for 
 intertwining operators without logarithms 
in \cite{H-diff-eqn}  and observed 
in \cite{HLZ7} to hold also  for logarithmic intertwining operators. 
To verify Assumptions 9, 11, and 12, we need some results of 
Miyamoto in \cite{M} on $C_{1}$-cofinite $\N$-gradable weak 
$V$-modules and also a weak version proved by Miyamoto in \cite{M} 
of the inequality of Nahm in \cite{N}.

In this section, we first recall and give proofs 
of the results of Miyamoto in \cite{M} on $C_{1}$-cofinite $\N$-gradable weak 
$V$-modules. Then  
we generalize the results on the differential equations 
in \cite{H-diff-eqn} by showing that the differential equations
in \cite{H-diff-eqn} are still satisfied even when two of the generalized 
$V$-modules are quasi-finite-dimensional 
but not $C_{1}$-cofinite.  Finally, instead of recalling the weak version of the inequality of Nahm
proved in \cite{M}, we give a proof of the generalization of 
the inequality of Nahm.

Before we give these results, 
we recall again the $C_{1}$-cofiniteness for weak $V$-modules and other types of $V$-modules. 
Let $W$ be a weak $V$-module. We say that $W$ is $C_{1}$-cofinite 
if $\dim W/C_{1}(W)<\infty$, where $C_{1}(W)$ is the subspace of
$W$ spanned by elements of the form $v_{-1}w$ for $v\in V_{+}
=\coprod_{n\in \Z_{+}}V_{(n)}$ and $w\in W$. Since any other classes of 
$V$-modules are all weak modules, this definition also defines $C_{1}$-cofiniteness
for these other classes of $V$-modules. 

We first give a result of Miyamoto in \cite{M} on $C_{1}$-cofinite $\N$-gradable 
weak $V$-modules. This result combines
several results in different places in \cite{M}.
For the reader's convenience, we also provide a proof of this result. 

\begin{prop}\label{C-1-l.g.r}
Let $W$ be a $C_{1}$-cofinite $\N$-gradable weak
$V$-module.
\begin{enumerate}
\item Let $W=\coprod_{n\in \N}W_{\l n\r}$ be a compatible $\N$-grading.
Then for $n\in \N$, $\dim W_{\l n\r} <\infty$. 

\item There exists a finite-dimensional subspace $M$ of $W$
such that $W$ is spanned by elements of the form 
$v^{(1)}_{-1}\cdots v^{(i)}_{-1}w$ for $i\in \N$,
$v^{(1)}, \dots, v^{(i)}\in V$ and $w\in M$. 
In particular, $W$ is finitely generated. 

\item There exists 
$h_{1}, \dots, h_{k}\in \C$
such that they are  not congruent to each other modulo $\Z$ and
$W=\coprod_{i=1}^{k}\coprod_{n\in h_{i}+\N}W_{[n]}$, where 
$W_{[n]}$ for $n\in h_{i}+\N$ are finite-dimensional 
generalized eigenspaces for $L_{W}(0)$ with eigenvalues $n$.
In particular, 
$W$ is a quasi-finite-dimensional generalized $V$-module.
\end{enumerate}
\end{prop}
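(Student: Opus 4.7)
My plan is to prove the three parts in order, using the $\N$-grading together with the commutation of $L_W(0)$ with vertex operator modes.

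For Part (1), I would lift a finite basis of $W/C_1(W)$ to homogeneous elements $w_1, \ldots, w_r$ with $w_j \in W_{\l n_j \r}$; this is possible since $W$ is $\N$-graded and $W/C_1(W)$ is finite-dimensional. The decomposition $W = \mathrm{span}\{w_j\} + C_1(W)$, applied componentwise, yields $W_{\l n \r} = \mathrm{span}\{w_j : n_j = n\} + \sum_{m=1}^{n} \{v_{-1} u : v \in V_{(m)},\, u \in W_{\l n - m \r}\}$. Since $V$ is grading-restricted (so each $V_{(m)}$ is finite-dimensional), induction on $n$ gives $\dim W_{\l n \r} < \infty$. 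For Part (2), set $M := \mathrm{span}\{w_1, \ldots, w_r\}$. Iterate the decomposition from Part (1) on each residual $u$: each iteration strictly lowers the $\N$-grade of the residue, so the process terminates after finitely many steps, expressing every $w \in W$ as a linear combination of monomials $v^{(1)}_{-1} \cdots v^{(p)}_{-1} w_j$ with $v^{(\ell)} \in V_+ \subset V$ and $w_j \in M$.

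For Part (3), the key tool is the identity $[L_W(0), v_k] = (\wt(v) - k - 1)\, v_k$ for $v \in V$ homogeneous, derived from the M\"obius-module relation $[L_W(0), Y_W(v,x)] = Y_W(L(0)v, x) + x Y_W(L(-1)v, x)$. Setting $k = -1$ shows that $C_1(W)$ is $L_W(0)$-invariant, so $L_W(0)$ descends to the finite-dimensional quotient $W/C_1(W)$ with finitely many generalized eigenvalues; let $h_1, \ldots, h_k \in \C$ be representatives of the distinct cosets mod $\Z$, chosen minimal in each class. Choose the $w_j$'s as a lifted Jordan basis so that $(L_W(0) - h_{\sigma(j)})^{K_{\sigma(j)}} w_j \in C_1(W)$ for appropriate $\sigma(j) \in \{1, \ldots, k\}$ and $K_{\sigma(j)} \in \N$. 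Iterating the commutator yields the key formula
\[
(L_W(0) - h - \textstyle\sum_\ell \wt(v^{(\ell)}))^K (v^{(1)}_{-1} \cdots v^{(p)}_{-1} w) = v^{(1)}_{-1} \cdots v^{(p)}_{-1} (L_W(0) - h)^K w,
\]
so applying $(L_W(0) - h_{\sigma(j)} - \sum_\ell \wt(v^{(\ell)}))^{K_{\sigma(j)}}$ to a monomial $v^{(1)}_{-1} \cdots v^{(p)}_{-1} w_j$ pushes it into the $(p+1)$-fold iterated $C_1$-subspace $C_1^{p+1}(W) := \mathrm{span}\{u^{(1)}_{-1} \cdots u^{(p+1)}_{-1} w : u^{(\ell)} \in V_+,\, w \in W\}$. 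Since each $v_{-1}$ with $v \in V_+$ raises $\N$-grade by at least $1$, one has $C_1^s(W) \cap W^{\leq n} = 0$ for $s > n$, where $W^{\leq n} := \bigoplus_{m \leq n} W_{\l m \r}$. Iterating the annihilation argument on the resulting monomials in $C_1^{p+1}(W) \cap W^{\leq n}$ therefore annihilates any element of $W^{\leq n}$ in finitely many steps, via a polynomial in $L_W(0)$ whose roots lie in $\bigcup_i (h_i + \N)$. This establishes local finiteness of $L_W(0)$ on $W$ with eigenvalues in $\bigcup_i (h_i + \N)$, yielding the decomposition $W = \coprod_{i=1}^{k} \coprod_{n \in h_i + \N} W_{[n]}$. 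Finite-dimensionality of each $W_{[n]}$ follows from the fact that only monomials whose leading eigenvalue equals $n$ can contribute, forcing $\sum_\ell \wt(v^{(\ell)}) = n - h_{\sigma(j)}$ and thus bounding their $\N$-grade; so $W_{[n]}$ is contained in some finite-dimensional $W^{\leq N(n)}$ from Part (1).

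The main obstacle I foresee is in Part (3): rigorously tracking that the iterated shifted polynomials in $L_W(0)$ applied to deeper monomials in $C_1^s(W)$ continue to land in ever-deeper $C_1^{s'}(W)$ with $s' > s$, while the shifts along the way remain in $\bigcup_i (h_i + \N)$. The resolution is the clean interplay between the commutator identity $[L_W(0), v_{-1}] = \wt(v)\, v_{-1}$ and the strict grade-raising property of $v_{-1}$ for $v \in V_+$, which together force the iteration to terminate inside any $W^{\leq n}$.
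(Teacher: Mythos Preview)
Your arguments for Parts 1 and 2 are essentially the paper's: induction on the $\N$-degree using $C_1$-cofiniteness and the grading-restriction of $V$, with only cosmetic differences (you lift a basis of $W/C_1(W)$ to homogeneous elements, while the paper enlarges $M$ to $\coprod_{n\le N_0}W_{\l n\r}$).

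For Part 3 the paper is much more direct. It simply notes that each $W_{\l n\r}$ is $L_W(0)$-invariant and, by Part~1, finite-dimensional, so it splits into finitely many generalized $L_W(0)$-eigenspaces. Since the paper's $M=\coprod_{n\le N_0}W_{\l n\r}$ is then itself $L_W(0)$-invariant and finite-dimensional, its eigenvalues give the cosets $h_1,\dots,h_k$; the commutator $[L_W(0),v_{-1}]=(\wt v)\,v_{-1}$ together with the monomial spanning from Part~2 immediately exhibits every monomial as a generalized eigenvector with eigenvalue in $\bigcup_i(h_i+\N)$. No filtration by $C_1^s(W)$, no iterated polynomials, no lifted Jordan basis is needed.

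Your detour through the induced action of $L_W(0)$ on $W/C_1(W)$ and the filtration $C_1^s(W)$ is more elaborate, and its termination step quietly relies on the very fact the paper uses up front. When you write ``the resulting monomials in $C_1^{p+1}(W)\cap W^{\le n}$'' you are assuming that applying a polynomial in $L_W(0)$ to an element of $W^{\le n}$ stays in $W^{\le n}$, i.e.\ that $L_W(0)$ preserves the $\N$-grading. Your proposed resolution---the commutator $[L_W(0),v_{-1}]=(\wt v)\,v_{-1}$ plus the grade-raising of $v_{-1}$---does not give this: those two facts govern how $L_W(0)$ interacts with $v_{-1}$ and how $v_{-1}$ interacts with the $\N$-grading, but say nothing about how $L_W(0)$ alone moves $\N$-degree. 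Once you grant $L_W(0)$-invariance of each $W_{\l n\r}$ (as the paper does), your iteration does terminate, but then the paper's one-line decomposition of each finite-dimensional $W_{\l n\r}$ is both shorter and avoids all the $C_1^s$-bookkeeping.
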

\begin{proof}
We prove first $\dim W_{\l n\r}<\infty$  for $n\in \N$. 
We use induction on $n$. 
For $v\in V_{(m)}$ and $w\in W_{\l n\r}$, where $m\in \Z_{+}$ and $n\in \N$,
$v_{-1}w\in W_{\l m+n\r}\ne W_{\l 0\r}$.
Hence nonzero elements of $C_{1}(W)$ cannot be in 
$W_{\l 0\r}$. Therefore $\dim W_{\l 0\r}\le \dim W/C_{1}(W)<\infty$ 
since $W$ is $C_{1}$-cofinite. 

Assume that $\dim W_{\l k\r}<\infty$ for $k=0, \dots, n$. 
Then the subspace of $W_{\l n+1\r}$ spanned by elements of the form 
$v_{-1}w$ for $v\in V_{(n+1-k)}$ and $w\in W_{\l k\r}$
for $k=0, \dots, n$
is also finite dimensional since $\dim V_{(n+1-k)}<\infty$
and $\dim W_{\l k\r}<\infty$ for $k=0, \dots, n$. But 
the quotient of $W_{\l n+1\r}$ by this subspace of $W_{\l n+1\r}$
is by definition a subspace of $W/C_{1}(W)$. Since 
$\dim W/C_{1}(W)<\infty$, this quotient is also 
finite dimensional. Then we have  $\dim W_{\l n+1\r}<\infty$.

Since $\dim W_{\l n\r}<\infty$, we also have 
$\dim \coprod_{n=0}^{N}W_{\l n\r}<\infty$ for $N\in \N$. 
But every finite-dimensional 
subspace of $W$ must be in $\coprod_{n=0}^{N}W_{\l n\r}$
for some $N\in \N$. Take a finite-dimensional 
subspace  $M$ of $W$ such that $W=C_{1}(W)+ M$. 
Then there exists $N_{0}\in \N$ such that $M\subset 
\coprod_{n=0}^{N_{0}}W_{\l n\r}$ and thus 
$\coprod_{n=N_{0}+\Z_{+}}W_{\l n\r}
\subset C_{1}(W)$. We can now take $M$ above to be 
$\coprod_{n=0}^{N_{0}}W_{\l n\r}$. So from now on,  
$M=\coprod_{n=0}^{N_{0}}W_{\l n\r}$. 

We are ready to show that 
$W$ is spanned by elements of the form 
$v^{(1)}_{-1}\cdots v^{(i)}_{-1}w$ for $i\in \N$,
$v^{(1)}, \dots, v^{(i)}\in V$ and $w\in M$. Denote the space spanned by
elements of the form above by $\widetilde{W}$. What we want to prove is 
$W=\widetilde{W}$. 

We need to prove that $W_{\l n\r}\subset \widetilde{W}$
for every $n\in \N$. We use induction on $n$.
When $n\le N_{0}$,  $W_{\l n\r}\subset 
M\subset \widetilde{W}$.
Assume that 
for $m< n\in N_{0}+\Z_{+}$, $W_{\l m\r}\subset \widetilde{W}$.
Then $W_{\l n\r}
\subset C_{1}(W)$ is spanned by elements of the form $v_{-1}w$, where 
$v\in V_{(l)}$ and $w\in W_{\l m\r}$ for $l\in \Z_{+}$ and 
$m\in \N$, such that $l+m=n>N$. Since $l>0$, we have 
$m<n$. By induction assumption, $w\in \widetilde{W}$
and thus $v_{-1}w\in \widetilde{W}$. So
$W_{\l (n\r}\subset \widetilde{W}$ and thus Conclusion 2 is true. 

Since $W_{\l n\r}$ for fixed $n\in \N$ is invariant under the action of $L_{W}(0)$
and $\dim W_{\l n\r}<\infty$, $W_{\l n\r}$ can be decomposed 
as a direct sum of generalized eigenspaces of $L_{W}(0)$. 
Hence $W$ can also be decomposed as a 
direct sum of generalized eigenspaces of $L_{W}(0)$. 
In particular, the finite-dimensional invariant subspace 
$M=\coprod_{n=0}^{N_{0}}W_{\l n\r}$ of 
$W$ under $L_{W}(0)$  is a direct sum of finitely many
generalized eigenspaces of $L_{W}(0)$. So there exists 
$h_{1}, \dots, h_{k}\in \C$ and $\widetilde{N}\in \N$ such that  
$h_{1}, \dots, h_{k}$ are not congruent to each other modulo $\Z$ and
$M=\coprod_{i=1}^{k}\coprod_{n=0}^{\widetilde{N}}
M_{[h_{i}+n]}$, where 
$M_{[h_{i}+n]}$ for $i=1, \dots, k$ and $n=0, \dots, \widetilde{N}$
are finite-dimensional 
generalized eigenspaces 
of $L_{W}(0)$ with eigenvalues $h_{i}+n$. 

On the other hand, since 
$W$ is spanned by elements of the form 
$v^{(1)}_{-1}\cdots v^{(i)}_{-1}w$ for $i\in \N$,
$v^{(1)}, \dots, v^{(i)}\in V$ and $w\in M$,
we see that 
$W=\coprod_{i=1}^{k}\coprod_{n\in h_{i}+\N}W_{[n]}$.
where $W_{[n]}$ are finite-dimensional generalized eigenspaces 
of $L_{W}(0)$ with eigenvalues $n$.  Thus $W$ is a quasi-finite-dimensional 
generalized $V$-module. 
\end{proof}

Now we give results involving intertwining operators. 
We refer the reader to \cite{HLZ2} for the precise definition
of (logarithmic) intertwining operators among generalized $V$-modules. 
As we have mentioned and have been doing in Section 2, in this paper,  
we omit  the word ``logarithmic"
to call logarithmic intertwining operators simply intertwining operators,
unless it is necessary to 
emphasize that there are logarithms of the variables. 
Also, we note that (logarithmic) intertwining operators are well defined for 
weak modules. 

We first generalize the results on differential equations 
 of the author in \cite{H-diff-eqn} which is observed in \cite{HLZ7} to 
hold also for logarithmic intertwining operators. In fact, it is  observed 
in Remark 1.7 in \cite{H-diff-eqn} that if one of the $V$-modules is not $C_{1}$-cofinite 
but is generated by a lowerest vector,  the results on
the differential equations in \cite{H-diff-eqn} still hold. We now show that 
the results and proofs on these differential equations in \cite{H-diff-eqn} 
also work if the two generalized $V$-modules placed at $0$ and $\infty$
are quasi-finite-dimensional generalized $V$-modules (not necessarily generated by 
lowerest vectors).

\begin{thm}\label{sys1}
Let $W_{1}$, $W_2$ be 
$C_{1}$-cofinite grading-restricted generalized
$V$-modules and $W_{0}$ and $W_{3}$ quasi-finite-dimensional 
generalized $V$-modules. Then given either the singular point $z_{1}=\infty$, $z_{2}=0$
or the singular point $z_{1}=z_{2}$, $z_{2}=\infty$, 
for $w_{0}\in W_{0}$, $w_{1}\in W_{1}$, $w_{2}\in W_{2}$, $w_{3}\in W_{3}$, there exist
$$a_{k}(z_{1}, z_{2}),
b_{l}(z_{1}, z_{2})\in R=
\mathbb{C}[z_{1}^{\pm 1}, z_{2}^{\pm 1}, (z_{1}-z_{2})^{-1}]$$
for $k=1, \dots, m$ and $l=1, \dots, n$ such that
for lower-bounded generalized $V$-modules $W_{4}$,
intertwining operators $\mathcal{Y}_{1}$ and $\mathcal{Y}_{2}$ of types
$\binom{W_{0}'}{W_{1}W_{4}}$ and $\binom{W_{4}}{W_{2}W_{3}}$,
respectively, the series
\begin{equation}\label{prod-int}
\langle w_{0}, \Y_{1}(w_{1}, z_{1})\Y_{2}(w_{2}, z_{2})w_{3}\rangle
\end{equation}
satisfy the expansions of the system of differential equations
\begin{eqnarray}
\frac{\partial^{m}\varphi}{\partial z_{1}^{m}}+a_{1}(z_{1}, z_{2})
\frac{\partial^{m-1}\varphi}{\partial z_{1}^{m-1}}
+\cdots + a_{m}(z_{1}, z_{2})\varphi&=&0,\label{eqn1}\\
\frac{\partial^{n}\varphi}{\partial z_{2}^{n}}+b_{1}(z_{1}, z_{2})
\frac{\partial^{n-1}\varphi}{\partial z_{2}^{n-1}}
+\cdots + b_{n}(z_{1}, z_{2})\varphi&=&0\label{eqn2}
\end{eqnarray}
of regular singular point at $z_{1}=\infty$, $z_{2}=0$
on the regions $|z_{1}|>|z_{2}|>0$ or at the singular point 
$z_{1}=z_{2}$, $z_{2}=\infty$ on the region $|z_{2}|>|z_{1}-z_{2}|>0$. 
\end{thm}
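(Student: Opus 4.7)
The plan is to adapt the proof of the differential equations theorem from \cite{H-diff-eqn} (extended to logarithmic intertwining operators in \cite{HLZ7}) to the present setting in which $W_0$ and $W_3$ are only quasi-finite-dimensional. As in \cite{H-diff-eqn}, set $R=\C[z_1^{\pm 1}, z_2^{\pm 1}, (z_1-z_2)^{-1}]$, construct the $R$-module $T=R\otimes W_1\otimes W_2$, and let $J\subset T$ be the $R$-submodule generated by the relations encoding $C_1(W_1)$, $C_1(W_2)$, together with the commutator and $L(-1)$-derivative identities that translate, in matrix coefficients, into formal-series identities between expressions involving $\Y_1$ and $\Y_2$. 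Since $W_1$ and $W_2$ are $C_1$-cofinite, one then concludes, exactly as in \cite{H-diff-eqn}, that $T/J$ is finitely generated over $R$.

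I would next show that for any lower-bounded $V$-module $W_4$ and any intertwining operators $\Y_1, \Y_2$ of the given types, the assignment
$$f(z_1, z_2)\otimes w_1\otimes w_2 \;\longmapsto\; f(z_1, z_2)\,\langle w_0, \Y_1(w_1, z_1)\Y_2(w_2, z_2)w_3\rangle$$
descends to a well-defined $R$-linear map on $T/J$. The $L(-1)$-derivative property, applied inside $\Y_1$, makes $\partial/\partial z_1$ an $R$-module endomorphism of $T/J$; finite generation over $R$ then forces the iterates $(\partial/\partial z_1)^k\langle w_0, \Y_1(w_1, z_1)\Y_2(w_2, z_2)w_3\rangle$ to satisfy an $R$-linear dependence, yielding an equation of the form (\ref{eqn1}) of some finite order $m$. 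Applying $L(-1)$ inside $\Y_2$ and running the analogous argument in $z_2$ gives (\ref{eqn2}). The key observation making this generalization possible is that the construction of $T$, $J$, and the derivation of the differential equation uses only $W_1$ and $W_2$; the modules $W_0$ and $W_3$ appear only as containers for the fixed vectors $w_0, w_3$ which are contracted into the matrix coefficient after the map is defined. Quasi-finite-dimensionality of $W_0$ and $W_3$ already guarantees that this matrix coefficient is a well-defined formal series in $z_1, z_2, \log z_1, \log z_2$ and that truncations to weights below any real bound lie in finite-dimensional spaces, which is all one needs to justify the formal manipulations. This is precisely the spirit of Remark 1.7 in \cite{H-diff-eqn}, where $C_1$-cofiniteness of a single boundary module was relaxed; here the same reasoning is applied to both $W_0$ and $W_3$ simultaneously.

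The main obstacle will be a careful bookkeeping verification that every relation placed into $J$ in \cite{H-diff-eqn} remains valid under matrix-coefficient evaluation against arbitrary $w_0\in W_0$ and $w_3\in W_3$ in merely quasi-finite-dimensional modules, and that the weight-grading arguments used to produce finite reductions inside $T/J$ go through without any use of $C_1$-cofiniteness of $W_0$ or $W_3$. Finally, the regular singular point property at $z_1=\infty, z_2=0$ and at $z_1=z_2, z_2=\infty$ is obtained, as in \cite{H-diff-eqn}, by choosing generators of $T/J$ so that the connection matrices representing $\partial/\partial z_1$ and $\partial/\partial z_2$ in these generators exhibit at worst simple poles along the corresponding singular loci under the relevant change of variables; this step depends only on the $R$-module structure of $T/J$ and hence is unaffected by the weakening of the hypotheses on $W_0$ and $W_3$.
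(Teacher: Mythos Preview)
Your setup has a genuine structural gap: in \cite{H-diff-eqn} the $R$-module $T$ is not $R\otimes W_1\otimes W_2$ but $R\otimes W_0\otimes W_1\otimes W_2\otimes W_3$, and this is essential. The generators $\mathcal{A}(u,w_0,w_1,w_2,w_3)$ and $\mathcal{B}(u,w_0,w_1,w_2,w_3)$ of $J$ express $u_{-1}w_1$ (resp.\ $u_{-1}w_2$) in terms of sums over modes of $u$ acting on the \emph{other three} tensor factors, in particular on $w_0$ and $w_3$. So there is no universal relation living purely in $W_1\otimes W_2$; the assignment $w_1\otimes w_2\mapsto \langle w_0,\Y_1(w_1,z_1)\Y_2(w_2,z_2)w_3\rangle$ for fixed $w_0,w_3$ does not factor through any $J$ defined independently of $w_0,w_3$. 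Consequently your claim ``the construction of $T$, $J$, and the derivation of the differential equation uses only $W_1$ and $W_2$'' is not correct, and the finite-generation step as you describe it does not go through.

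What the paper actually does is keep all four factors in $T$ and exploit quasi-finite-dimensionality of $W_0,W_3$ as follows. For the fixed $w_0,w_3$ choose $N_0,N_3$ with $w_0\in\coprod_{\Re(n)\le N_0}(W_0)_{[n]}$ and $w_3\in\coprod_{\Re(n)\le N_3}(W_3)_{[n]}$, and replace $T$ by the truncated $R$-submodule
\[
T_{N_0,N_3}=R\otimes\Bigl(\coprod_{\Re(n)\le N_0}(W_0)_{[n]}\Bigr)\otimes W_1\otimes W_2\otimes\Bigl(\coprod_{\Re(n)\le N_3}(W_3)_{[n]}\Bigr).
\]
The key observation is that the relations $\mathcal{A}$ and $\mathcal{B}$ (but not $\mathcal{C},\mathcal{D}$) involve only \emph{positive} modes on $W_0$ and $W_3$, hence preserve $T_{N_0,N_3}$; since the truncated $W_0$- and $W_3$-pieces are finite-dimensional by quasi-finite-dimensionality, the original argument of Proposition~1.1 and Corollary~1.2 in \cite{H-diff-eqn} shows $T_{N_0,N_3}/J_{N_0,N_3}$ is finitely generated. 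Then, because $R$ is Noetherian, the $R$-submodule generated by the classes $[w_0\otimes L_{W_1}(-1)^i w_1\otimes w_2\otimes w_3]$ and $[w_0\otimes w_1\otimes L_{W_2}(-1)^j w_2\otimes w_3]$ is finitely generated, and this (the analogue of Corollary~1.3 in \cite{H-diff-eqn}) is all that is needed to run Theorems~1.4 and~2.3 there. Your intuition that one can weaken the hypotheses on $W_0,W_3$ is right, but the mechanism is this truncation-plus-Noetherian argument, not a reduction to a two-factor $T$.
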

\begin{proof}
In this proof, we show that calculations and proofs used to derive the differential equations 
in \cite{H-diff-eqn} still work here. So we assume that the reader is familiar with 
the paper \cite{H-diff-eqn}.  

In \cite{H-diff-eqn}, the reason why we need the $C_{1}$-cofiniteness of modules is 
because we need to show that the $R$-module  $T/J$ is finitely generated, where 
$R=\C[z_{1}, z_{1}^{-1}, z_{1}, z_{1}^{-1}, (z_{1}-z_{2})^{-1}]$, 
$T=R\otimes W_{0}\otimes W_{1}\otimes W_{2}\otimes W_{3}$ and $J$ is 
the $R$-submodule of $T$ generated by elements 
$\mathcal{A}(u, w_{0}, w_{1}, w_{2}, w_{3})$, $\mathcal{B}(u, w_{0}, w_{1}, w_{2}, w_{3})$,
$\mathcal{C}(u, w_{0}, w_{1}, w_{2}, w_{3})$ and $\mathcal{D}(u, w_{0}, w_{1}, w_{2}, w_{3})$
given in \cite{H-diff-eqn} for $u\in V$, $w_{0}\in W_{0}$,
$w_{1}\in W_{1}$, $w_{2}\in W_{2}$, $w_{3}\in W_{3}$. 
But this strong result that $T/J$ is finitely generated is in fact not needed in 
\cite{H-diff-eqn}.
When the differential equations are derived  in \cite{H-diff-eqn} for \eqref{prod-int}
with fixed $w_{0}, w_{1}, w_{2}, w_{3}$, only Corollary 1.3 in \cite{H-diff-eqn} 
is needed. Corollary 1.3 in  \cite{H-diff-eqn} states that 
the $R$-submodule of $T/J$ generated by 
elements of the form $[w_{0}\otimes L_{W_{1}}(-1)^{i}w_{1}\otimes 
w_{2}\otimes w_{3}]$ and $[w_{0}\otimes w_{1}\otimes 
L_{W_{2}}(-1)^{j}w_{2}\otimes w_{3}]$ (where we use $[X]$ to denote the 
coset in $T/J$ containing $X$) for $i, j\in \N$ is finitely generated. 
So to derive the differential equations,
we need only show that for fixed $w_{0}, w_{1}, w_{2}, w_{3}$, this 
$R$-submodule of $T/J$ is finitely generated. 

To prove that this $R$-submodule of $T/J$ is finitely generated,
we first prove that a larger $R$-module than this $R$-submodule
 of $T/J$ is finitely generated. Given $N_{0}, N_{3}\in \N$,
let 
$$T_{N_{0}, N_{3}}=R\otimes \left(\coprod_{\Re(n)\le N_{0}}(W_{0})_{[n]}\right)
\otimes W_{1}\otimes W_{2}\otimes \left(\coprod_{\Re(n)\le N_{3}}(W_{3})_{[n]}\right).$$
Then $T_{N_{0}, N_{3}}$ is an $R$-submodule of $T$. Note that elements of the 
form $\mathcal{C}(u, w_{0}, w_{1}, w_{2}, w_{3})$ and 
$\mathcal{D}(u, w_{0}, w_{1}, w_{2}, w_{3})$ for $u\in V$, $w_{0}
\in \coprod_{\Re(n)\le N_{0}}(W_{0})_{[n]}$, $w_{1}\in W_{1}$, $w_{2}\in W_{2}$
and $w_{3}\in \coprod_{\Re(n)\le N_{3}}(W_{3})_{[n]}$ in general might not be in $T_{N_{0}, N_{3}}$.
But elements of the 
form $\mathcal{A}(u, w_{0}, w_{1}, w_{2}, w_{3})$ and 
$\mathcal{B}(u, w_{0}, w_{1}, w_{2}, w_{3})$ for the same $u, w_{0}, w_{1}, w_{2}, w_{3}$
 are always in $T_{N_{0}, N_{3}}$.
Let $J_{N_{0}, N_{3}}$ be the  $R$-submodule of $T_{N_{0}, N_{3}}$ 
spanned by $\mathcal{A}(u, w_{0}, w_{1}, w_{2}, w_{3})$ and 
$\mathcal{B}(u, w_{0}, w_{1}, w_{2}, w_{3})$ for $u\in V$, $w_{0}
\in \coprod_{\Re(n)\le N_{0}}(W_{0})_{[n]}$, $w_{1}\in W_{1}$, $w_{2}\in W_{2}$
and $w_{3}\in \coprod_{\Re(n)\le N_{3}}(W_{3})_{[n]}$.
Since $W_{0}$ and $W_{3}$ are quasi-finite dimensional, 
$\coprod_{\Re(n)\le N_{0}}(W_{0})_{[n]}$ and $\coprod_{\Re(n)\le N_{3}}(W_{3})_{[n]}$
are finite-dimensional. This fact and the same proofs as those of Proposition 1.1 
and Corollary 1.2 in  \cite{H-diff-eqn} shows that $T_{N_{0}, N_{3}}/J_{N_{0}, N_{3}}$ 
is finitely generated. 

For fixed given $w_{0}, w_{1}, w_{2}, w_{3}$, take  $N_{0}, N_{3}\in \N$ such that 
$w_{0}\in \coprod_{\Re(n)\le N_{0}}(W_{0})_{[n]}$ and 
$w_{3}\in \coprod_{\Re(n)\le N_{3}}(W_{3})_{[n]}$. Then 
$T_{N_{0}, N_{3}}/J_{N_{0}, N_{3}}$ is finitely generated. 
Since $R$ is Noetherian, the $R$-submodule of 
$T_{N_{0}, N_{3}}/J_{N_{0}, N_{3}}$  generated by elements of the form 
$[w_{0}\otimes L_{W_{1}}(-1)^{i}w_{1}\otimes 
w_{2}\otimes w_{3}]$ and $[w_{0}\otimes w_{1}\otimes 
L_{W_{2}}(-1)^{j}w_{2}\otimes w_{3}]$  for $i, j\in \N$ is also finitely generated. 

Now the proof of Theorem 1.4 in \cite{H-diff-eqn}  gives the differential equations and the proof 
Theorem 2.3 in \cite{H-diff-eqn}  gives the regularity of 
the singular points.
\end{proof}

The proof of the following result is the same as the proof of Theorem 
\ref{sys1} above:

\begin{thm}\label{sys2}
Let $W_{i}$ for $i=1, \dots, n$ be $C_{1}$-cofinite grading-restricted
generalized $V$-modules and $W_{0}$ and $W_{n+1}$ quasi-finite-dimensional 
generalized $V$-modules.  Then for any $w_{(0)}'\in
W_{0}'$, $w_{(1)}\in W_{1}, \dots, w_{(n+1)}\in W_{n+1}$, there exist
\[
a_{k, \;l}(z_{1}, \dots, z_{n})\in {\mathbb C}[z_{1}^{\pm 1}, \dots,
z_{n}^{\pm 1}, (z_{1}-z_{2})^{-1}, (z_{1}-z_{3})^{-1}, \dots,
(z_{n-1}-z_{n})^{-1}],
\]
for $k=1, \dots, m$ and $l=1, \dots, n,$ such that the following
holds: For any lower-bounded  
generalized $V$-modules $\widetilde{W}_{1}, \dots,
\widetilde{W}_{n-1}$, and any intertwining operators
\[
{\cal Y}_{1}, {\cal Y}_{2}, \dots, {\cal
Y}_{n-1}, {\cal Y}_{n}
\]
of types 
\[
\binom{W_{0}}{
W_{1}\widetilde{W}_{1}}, \binom{\widetilde{W}_{1}}{
W_{2}\widetilde{W}_{2}}, \dots, \binom{\widetilde{W}_{n-2}}{
W_{n-1}\widetilde{W}_{n-1}}, \binom{\widetilde{W}_{n-1}}{
W_{n}W_{n+1}},
\]
respectively, the series
\begin{equation}
\langle w_{(0)}', {\cal Y}_{1}(w_{(1)}, z_{1})\cdots {\cal Y}_{n}(w_{(n)},
z_{n})w_{(n+1)}\rangle
\end{equation}
satisfies the expansion of the system of differential equations
\[
\frac{\partial^{m}\varphi}{\partial z_{l}^{m}}+ \sum_{k=1}^{m}
a_{k,
\;l}(z_{1}, \dots, z_{n}) \frac{\partial^{m-k}\varphi}{\partial
z_{l}^{m-k}}=0,\;\;\;l=1, \dots, n
\]
on the region $|z_{1}|>\cdots >|z_{n}|>0$.  
Moreover, for any set of possible
singular points of the system
\begin{equation}\label{sys-eqns}
\frac{\partial^{m}\varphi}{\partial z_{l}^{m}}+ \sum_{k=1}^{m} a_{k,
\;l}(z_{1}, \dots, z_{n})\frac{\partial^{m-k}\varphi}{\partial
z_{l}^{m-k}}=0,\;\;\;l=1, \dots, n
\end{equation}
such that either $z_i = 0$ or $z_i = \infty$ for some $i$ or $z_i =
z_j$ for some $i \ne j$, $a_{k, \;l}(z_{1}, \dots, z_{n})$ can be
chosen for $k=1, \dots, m$ and $l=1, \dots, n$ so that these singular
points are regular.
\end{thm}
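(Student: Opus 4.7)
The plan is to mimic the proof of Theorem \ref{sys1} nearly verbatim, with the two-variable setup replaced by $n$ variables, using the same truncation trick to handle that $W_0$ and $W_{n+1}$ are only quasi-finite-dimensional rather than $C_1$-cofinite.

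First, I would set up the ambient $R$-module: let
$$R = \C[z_1^{\pm 1}, \ldots, z_n^{\pm 1}, (z_1-z_2)^{-1}, (z_1-z_3)^{-1}, \ldots, (z_{n-1}-z_n)^{-1}]$$
and $T = R \otimes W_0' \otimes W_1 \otimes \cdots \otimes W_n \otimes W_{n+1}$. For each $u \in V$ and each middle slot $i \in \{1, \ldots, n\}$, introduce the natural generalizations of the relations $\mathcal{A}, \mathcal{B}$ of \cite{H-diff-eqn} (which encode the Jacobi identity and commutator formulas between consecutive intertwining operator slots); the analogues of $\mathcal{C}, \mathcal{D}$ involve acting with modes of $V$ at the end slots $W_0'$ or $W_{n+1}$.

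Second, since $W_0$ (hence also $W_0'$) and $W_{n+1}$ are only quasi-finite-dimensional, fix $N_0, N_{n+1} \in \N$ large enough that the given $w_{(0)}'$ and $w_{(n+1)}$ lie in the truncated subspaces, and set
$$T_{N_0, N_{n+1}} = R \otimes \Big(\coprod_{\Re(k) \le N_0}(W_0')_{[k]}\Big) \otimes W_1 \otimes \cdots \otimes W_n \otimes \Big(\coprod_{\Re(k) \le N_{n+1}}(W_{n+1})_{[k]}\Big).$$
Let $J_{N_0, N_{n+1}}$ be the $R$-submodule generated by the $\mathcal{A}$- and $\mathcal{B}$-type relations for each adjacent pair of middle slots alone; these stay inside $T_{N_0, N_{n+1}}$ by construction, while the $\mathcal{C}$- and $\mathcal{D}$-type relations are dropped since they may leave the truncation. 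Finite-dimensionality of the two truncated end pieces, combined with the $C_1$-cofiniteness of each $W_i$, allows the argument of Proposition 1.1 and Corollary 1.2 of \cite{H-diff-eqn} to be run slot by slot for $i = 1, \ldots, n$ to conclude that $T_{N_0, N_{n+1}}/J_{N_0, N_{n+1}}$ is finitely generated over $R$.

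Third, Noetherianness of $R$ then implies that the $R$-submodule of this quotient generated by the classes of $w_{(0)}' \otimes L_{W_1}(-1)^{i_1}w_{(1)} \otimes \cdots \otimes L_{W_n}(-1)^{i_n}w_{(n)} \otimes w_{(n+1)}$ for $(i_1, \ldots, i_n) \in \N^n$ is finitely generated; for each $l$ this yields a nontrivial $R$-linear dependence among the powers $L_{W_l}(-1)^{i}$ which, after pairing with intertwining operators and invoking the $L(-1)$-derivative property, becomes the $l$-th equation of the claimed system. Regularity of the singular points at $z_i = 0$, $z_i = \infty$, and $z_i = z_j$ is then obtained exactly as in the proof of Theorem 2.3 of \cite{H-diff-eqn}, by adjusting the $a_{k,l}$ through multiplication of relations by appropriate powers of $z_i$ and $(z_i-z_j)$. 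The main obstacle is purely bookkeeping: with $n$ operators there are many more pairs of slots on which commutator/Jacobi-type relations can be applied, and one must simultaneously track which relations preserve the truncated module and carry out the singular-point analysis in all $n$ variables; conceptually, however, no new idea beyond those already deployed in Theorem \ref{sys1} is needed.
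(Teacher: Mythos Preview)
Your proposal is correct and matches the paper's approach exactly: the paper's entire proof of Theorem \ref{sys2} is the single sentence ``The proof of the following result is the same as the proof of Theorem \ref{sys1} above,'' and what you have written is precisely the $n$-variable version of that argument---truncate the two end modules using quasi-finite-dimensionality, keep only the relations that stay inside the truncation, invoke the finite-generation argument of Proposition~1.1/Corollary~1.2 of \cite{H-diff-eqn} and Noetherianness of $R$, then appeal to Theorems~1.4 and~2.3 of \cite{H-diff-eqn} for the equations and the regularity of singular points.
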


For the next two results, 
we  need the notion of surjectivity of an intertwining operator. 
Let $W_{1}$, $W_{2}$ and $W_{3}$ be weak $V$-modules. 
An intertwining operator $\Y$ of type $\binom{W_{3}}{W_{1}W_{2}}$
is said to be surjective if $W_{3}$ is spanned by 
the coefficients of $\Y(w_{1}, x)w_{2}$ for $w_{1}\in W_{1}$ and $w_{2}\in W_{2}$.
If for a weak $V$-module $W_{3}$, there is an surjective intertwining 
operator of type $\binom{W_{3}}{W_{1}W_{2}}$, we say that 
$W_{3}$ is a {\it weak surjective product of $W_{1}$ and $W_{2}$}. 
If $W_{3}$ is generalized $V$-module or grading-restricted 
generalized $V$-module or other classes of $V$-modules, we also use the terms
{\it generalized surjective product of  $W_{1}$ and $W_{2}$}, 
{\it grading-restricted generalized surjective product  
 of $W_{1}$ and $W_{2}$} and so on.

\begin{prop}\label{int-weak-generalized}
Let $W_{1}$ and $W_{2}$ be generalized $V$-modules and $W_{3}$ 
a weak $V$-module. If $W_{3}$ is a weak surjective product of 
$W_{1}$ and $W_{2}$, then $W_{3}$ is also a generalized $V$-module. 
\end{prop}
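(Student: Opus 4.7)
The plan is to construct a $\C$-grading on $W_3$ realizing it as a direct sum of generalized $L_{W_3}(0)$-eigenspaces; this is precisely the additional structure needed to upgrade a weak $V$-module to a generalized $V$-module. Fix a surjective intertwining operator $\Y$ of type $\binom{W_3}{W_1 W_2}$, and for homogeneous $w_1\in (W_1)_{[h_1]}$ and $w_2\in (W_2)_{[h_2]}$, expand
$$\Y(w_1,x)w_2=\sum_{n\in\C}\sum_{k=0}^{K}(w_1)_{n,k}w_2\cdot x^{-n-1}(\log x)^k,$$
with only finitely many powers of $\log x$ by the logarithmic intertwining operator axiom. By surjectivity the coefficients $(w_1)_{n,k}w_2$ span $W_3$ as $w_1,w_2,n,k$ vary, so it suffices to show that each such coefficient is a generalized $L_{W_3}(0)$-eigenvector.

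To this end, I would invoke the $L(0)$-commutator property of intertwining operators in the M\"{o}bius setting,
$$[L_{W_3}(0),\Y(w_1,x)]=\Y(L_{W_1}(0)w_1,x)+x\Y(L_{W_1}(-1)w_1,x),$$
together with the $L(-1)$-derivative $\Y(L_{W_1}(-1)w_1,x)=\tfrac{d}{dx}\Y(w_1,x)$. Write $L_{W_1}(0)w_1=h_1w_1+N_1w_1$ and $L_{W_2}(0)w_2=h_2w_2+N_2w_2$, where $N_1$ and $N_2$ act locally nilpotently on the relevant generalized eigenspaces. Comparing the coefficients of $x^{-n-1}(\log x)^k$ on both sides, and using $x\tfrac{d}{dx}(x^{-n-1}(\log x)^k)=-(n+1)x^{-n-1}(\log x)^k+kx^{-n-1}(\log x)^{k-1}$, one obtains
$$\bigl(L_{W_3}(0)-(h_1+h_2-n-1)\bigr)(w_1)_{n,k}w_2=(N_1w_1)_{n,k}w_2+(w_1)_{n,k}(N_2w_2)+(k+1)(w_1)_{n,k+1}w_2.$$

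The key observation is then that the subspace $V(w_1,w_2,n)\subset W_3$ spanned by $(N_1^i w_1)_{n,k'}(N_2^jw_2)$ for $i,j\ge 0$ and $0\le k'\le K$ is finite-dimensional (by local nilpotency of $N_1,N_2$ on $w_1,w_2$ and boundedness of $k'$), is preserved by the shifted operator $L_{W_3}(0)-(h_1+h_2-n-1)$, and each application of this operator strictly raises $i+j+k'$; hence the shifted operator is nilpotent on $V(w_1,w_2,n)$. Therefore $(w_1)_{n,k}w_2$ is a generalized $L_{W_3}(0)$-eigenvector with eigenvalue $h_1+h_2-n-1$, and summing over all homogeneous $w_1,w_2$ and over $n,k$ gives $W_3=\coprod_{h\in\C}(W_3)_{[h]}$, where $(W_3)_{[h]}$ denotes the generalized $L_{W_3}(0)$-eigenspace with eigenvalue $h$. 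This is exactly the generalized $V$-module structure on $W_3$ compatible with the given weak module structure. The main technical point requiring care is confirming that iteration of the shifted operator terminates within $V(w_1,w_2,n)$, which follows by reapplying the displayed coefficient identity to each basis vector $(N_1^i w_1)_{n,k'}(N_2^jw_2)$.
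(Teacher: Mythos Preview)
Your proof is correct and follows essentially the same route as the paper's: extract the coefficient identity from the $L(0)$-commutator and $L(-1)$-derivative formulas, then iterate to see that $(L_{W_3}(0)-(h_1+h_2-n-1))^J$ annihilates $(w_1)_{n,k}w_2$ once $J$ exceeds the combined nilpotency order of $N_1,N_2$ and the bound $K$ on the $\log$ powers. Your packaging via the finite-dimensional invariant subspace $V(w_1,w_2,n)$ with its filtration by $i+j+k'$ is a clean way to phrase what the paper does by writing out the iterated image as a linear combination of $\Y_{n,k+l}(L_{W_1}(0)_N^m w_1)L_{W_2}(0)_N^{n'}w_2$ with $m+n'+l=J$; the arguments are identical in substance.
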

\begin{proof}
For $w_{1}\in W_{1}$ and $w_{2}\in W_{2}$, we have
$$\Y(w_{1}, x)w_{2}=\sum_{k=0}^{K}\sum_{n\in \C}\Y_{n, k}(w_{1})
w_{2}x^{-n-1}(\log x)^{k},$$
where $\Y_{n, k}(w_{1})\in \hom(W_{2}, W_{3})$. 
We now take $w_{1}$ and $w_{2}$ to be homogeneous.
We have $L_{W_{1}}(0)=L_{W_{1}}(0)_{S}+L_{W_{1}}(0)_{N}$
and $L_{W_{2}}(0)=L_{W_{2}}(0)_{S}+L_{W_{2}}(0)_{N}$, 
where $L_{W_{1}}(0)_{S}$ and $L_{W_{2}}(0)_{S}$ are the semisimple 
parts of $L_{W_{1}}(0)$ and $L_{W_{2}}(0)$, respectively, and 
$L_{W_{1}}(0)_{N}$ and $L_{W_{2}}(0)_{N}$ are the nilpotent
parts of $L_{W_{1}}(0)$ and $L_{W_{2}}(0)$, respectively.
Then $L_{W_{1}}(0)_{S}w_{1}=(\wt w_{1}) w_{1}$ and 
$L_{W_{2}}(0)_{S}w_{2}=(\wt w_{2}) w_{2}$. 

The commutator formula between $L(0)$ and $\Y$ can be written as 
$$L_{W_{3}}(0)\Y(w_{1}, x)w_{2}=x\frac{d}{dx}\Y(w_{1}, x)w_{2}
+\Y(L_{W_{1}}(0)w_{1}, x)w_{2}+\Y(w_{1}, x)L_{W_{2}}(0)w_{2}.$$
Taking the coefficients of $x^{-n-1}(\log x)^{k}$ on both sides,
using the fact that $w_{1}$ and $w_{2}$ are homogeneous,
and then reorganizing the terms,
we obtain 
\begin{align}\label{W-3-weights}
&(L_{W_{3}}(0)-(\wt w_{1}-n-1+\wt w_{2}))
\Y_{n, k}(w_{1})w_{2}\nn
&\quad =\Y_{n, k}(L_{W_{1}}(0)_{N}w_{1})w_{2}
+\Y_{n, k}(w_{1})L_{W_{2}}(0)_{N}w_{2}
+(k+1)\Y_{n, k+1}(w_{1})w_{2}.
\end{align}
Using  \eqref{W-3-weights} repeatedly, we see that 
$$(L_{W_{3}}(0)-(\wt w_{1}-n-1+\wt w_{2}))^{J}\Y_{n, k}(w_{1})w_{2}$$
for $J\in \Z_{+}$ is a linear comnbination of elements 
of the form 
\begin{equation}\label{W-3-weights-2}
\Y_{n, k+l}(L_{W_{1}}(0)_{N}^{m}w_{1})L_{W_{2}}(0)_{N}^{n}w_{2}
\end{equation}
for $m, n, l\in \N$ such that $m+n+l=J$. When $J$ is sufficiently large, 
at least one of $m, n, l$ must be sufficiently large. 
Since $L_{W_{1}}(0)_{N}$ and $L_{W_{2}}(0)_{N}$ are nilpotent and 
$\Y_{n, k}(w_{1})w_{2}=0$ for $k>K$,  \eqref{W-3-weights-2} must be $0$
when $J$ is sufficiently large.  Hence we obtain 
$$(L_{W_{3}}(0)-(\wt w_{1}-n-1+\wt w_{2}))^{J}\Y_{n, k}(w_{1})w_{2}=0$$
for sufficiently large $J\in \Z_{+}$. Thus we see that 
$\Y_{n, k}(w_{1})w_{2}$ is a generalized eigenvector for 
$L_{W_{3}}(0)$ with eigenvalue $\wt w_{1}-n-1+\wt w_{2}$. 
Since $\Y$ is surjective, $\Y_{n, k}(w_{1})w_{2}$ for $w_{1}\in W_{1}$, 
$w_{2}\in W_{2}$, $n\in \C$ and $0\le k\le K$ span $W_{3}$.
So $W_{3}$ is a generalized $V$-module. 
\end{proof}

The next result is a generalization 
of the inequality of Nahm in \cite{N}. 
This generalization is stronger than a key result obtained later by 
Miyamoto in \cite{M}. Here by generalization, we mean 
that in the theorem below, the inequality holds for any weak surjective product  
of $W_{1}$ and $W_{2}$. We do not need the existence of 
any fusion or tensor product of $W_{1}$ and $W_{2}$
and the surjective  product  
of $W_{1}$ and $W_{2}$ can be a weak $V$-module. 
In fact, this generalization was essentially formulated 
by McRae and Sopin in \cite{MS} except that the surjective  product  
of $W_{1}$ and $W_{2}$ is probably a stronger type of $V$-modules than 
a weak $V$-module. 
It was observed also in \cite{MS} that the inequality can be obtained using
the number of independent solutions of the differential equations 
in \cite{M}. Here we give a full proof of this inequality.

\begin{thm}\label{n-W-1-W-2}
Let $W_{1}$, $W_{2}$ be $\N$-gradable weak $V$-modules. Then 
 for a weak surjective product 
$W_{3}$ of $W_{1}$ and $W_{2}$, 
\begin{equation}\label{nahm-ineq}
\dim (W_{3}/
C_{1}(W_{3}))\le \dim (W_{1}/
C_{1}(W_{1}))\dim (W_{2}/
C_{1}(W_{2})).
\end{equation}
\end{thm}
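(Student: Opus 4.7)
The plan is to construct a surjection $S_{1}\otimes S_{2}\twoheadrightarrow W_{3}/C_{1}(W_{3})$, where $S_{i}\subset W_{i}$ is a subspace complementary to $C_{1}(W_{i})$ with $\dim S_{i}=\dim W_{i}/C_{1}(W_{i})$. If either $W_{1}$ or $W_{2}$ fails to be $C_{1}$-cofinite, the right-hand side of \eqref{nahm-ineq} is infinite and the inequality is vacuous, so I may assume both $W_{1}$ and $W_{2}$ are $C_{1}$-cofinite. By Proposition \ref{C-1-l.g.r}, they are then quasi-finite-dimensional generalized $V$-modules; by Proposition \ref{int-weak-generalized}, $W_{3}$ is also a generalized $V$-module and in particular carries an $L(0)$-grading. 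Fix $S_{i}$ as above.

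The main algebraic tool is the identity obtained by applying $\res_{x_{0}}x_{0}^{-1}\res_{x_{1}}$ to the Jacobi identity for the intertwining operator $\Y$ of type $\binom{W_{3}}{W_{1}W_{2}}$: for $v\in V$, $w_{1}\in W_{1}$, $w_{2}\in W_{2}$,
\[
\Y(v_{-1} w_{1}, x) w_{2} = \sum_{i\ge 0} x^{i}\, v_{-i-1}\,\Y(w_{1}, x) w_{2} \;-\; \Y(w_{1}, x)\, Y^{+}_{W_{2}}(v, x) w_{2},
\]
where $Y^{+}_{W_{2}}(v, x) w_{2}=\sum_{m\ge 0} v_{m} w_{2}\, x^{-m-1}$. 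When $v\in V_{+}$, the $L(-1)$-derivative property gives $v_{-i-1}=\frac{1}{i!}(L(-1)^{i} v)_{-1}$ with $L(-1)^{i} v\in V_{+}$, so every term of the first sum lies in $C_{1}(W_{3})[[x, x^{-1}]]$. Extracting the coefficient of $x^{-k-1}$ yields the congruence
\[
\Y_{k}(v_{-1} w_{1})\, w_{2} \;\equiv\; -\sum_{m\ge 0}\Y_{k-m-1}(w_{1})(v_{m} w_{2}) \pmod{C_{1}(W_{3})}.
\]
A symmetric congruence for $\Y_{k}(w_{1})(v_{-1} w_{2})$ is obtained by applying the same derivation to the skew-symmetric intertwining operator $\widetilde{\Y}(w_{2}, x) w_{1}=e^{x L_{W_{3}}(-1)}\Y(w_{1}, -x) w_{2}$, using that $L_{W_{3}}(-1)$ preserves $C_{1}(W_{3})$.

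Combining the two reductions with the generating property of $S_{1}$ and $S_{2}$ coming from Proposition \ref{C-1-l.g.r} and iterating, every coefficient $\Y_{k}(w_{1}) w_{2}$ is congruent modulo $C_{1}(W_{3})$ to a finite linear combination of $\Y_{k'}(s_{1}) s_{2}$ with $s_{1}\in S_{1}$, $s_{2}\in S_{2}$, $k'\in\Z$. Since $\Y$ is surjective, $W_{3}$ is spanned modulo $C_{1}(W_{3})$ by such elements. The main obstacle is to cut down from all $k'\in\Z$ to a single canonical representative per pair $(s_{1}, s_{2})$, producing the desired surjection. For this I invoke the $L(0)$-grading: for homogeneous $s_{1}, s_{2}$, the coefficient $\Y_{k'}(s_{1}) s_{2}$ lies in the generalized eigenspace $(W_{3})_{[\wt s_{1}+\wt s_{2}-k'-1]}$, and the $L(-1)$-derivative formula $\Y_{k'-1}(w_{1}) w_{2}=-\frac{1}{k'}\Y_{k'}(L(-1) w_{1}) w_{2}$ (for $k'\ne 0$), combined iteratively with the two reductions above, permits shifting $k'$ while modifying only the $S_{i}$-representatives. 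A careful bookkeeping then shows that the images in $W_{3}/C_{1}(W_{3})$ collapse to at most one per pair $(s_{1}, s_{2})\in S_{1}\times S_{2}$, yielding the desired surjection and hence \eqref{nahm-ineq}.
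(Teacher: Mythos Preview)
Your reduction up to the point where $W_{3}/C_{1}(W_{3})$ is spanned by the classes $[\Y_{k}(s_{1})s_{2}]$ with $s_{1}\in S_{1}$, $s_{2}\in S_{2}$ and $k$ varying is essentially correct and parallels the induction in the paper (up to a sign in your Jacobi congruence, which should read $+\sum_{m\ge 0}\Y_{k-m-1}(w_{1})(v_{m}w_{2})$). The genuine gap is the last step. You never define a map $S_{1}\otimes S_{2}\to W_{3}/C_{1}(W_{3})$: for a fixed pair $(s_{1},s_{2})$ the elements $\Y_{k}(s_{1})s_{2}$ for different $k$ lie in different homogeneous pieces of $W_{3}$, and nothing you have written forces all but one of their classes to vanish. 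Your proposed device, the $L(-1)$-derivative $\Y_{k-1}(s_{1})s_{2}=-\tfrac{1}{k}\Y_{k}(L(-1)s_{1})s_{2}$, replaces $s_{1}$ by $L(-1)s_{1}$, which has strictly higher weight; reducing $L(-1)s_{1}$ modulo $C_{1}(W_{1})$ feeds back into the congruence and produces new $\Y_{k'}(\tilde{s}_{1})\tilde{s}_{2}$ with $\tilde{s}_{i}$ of \emph{lower} weight and shifted $k'$, so the process does not terminate at a single representative per pair. ``Careful bookkeeping'' here is a placeholder for the missing idea.

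The paper avoids this obstacle by passing to the dual. It shows that $C_{1}(W_{3})^{\bot}\subset W_{3}'$ is linearly isomorphic to $(W_{3}/C_{1}(W_{3}))'$ and defines
\[
f:\;C_{1}(W_{3})^{\bot}\longrightarrow (S_{1}\otimes S_{2})^{*},\qquad
f(w_{3}')(s_{1}\otimes s_{2})=\langle w_{3}',\,\Y(s_{1},z)s_{2}\rangle
\]
for one fixed $z\in\C^{\times}$; evaluating at $z$ is exactly what collapses all $k$'s into a single number and makes the target finite-dimensional. Your Jacobi congruence, run dually, becomes the inductive step showing that $f(w_{3}')=0$ forces $\langle w_{3}',\Y(w_{1},z)w_{2}\rangle=0$ for all $w_{1},w_{2}$, and then the $L(-1)$-derivative is used analytically (as $\Y(w_{1},\xi)=\Y(e^{(\xi-z)L(-1)}w_{1},z)$) to propagate this vanishing to every $\xi$, whence $w_{3}'=0$. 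Thus $f$ is injective and the inequality follows. If you want to salvage your primal approach, you would need to manufacture a single well-defined assignment $(s_{1},s_{2})\mapsto$ class in $W_{3}/C_{1}(W_{3})$; the paper's evaluation-at-$z$ trick is precisely what supplies this, and I do not see a substitute in your outline.
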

\begin{proof}
In the case that at least one of $\dim (W_{1}/
C_{1}(W_{1}))$ and $\dim (W_{2}/
C_{1}(W_{2}))$ is $\infty$, \eqref{nahm-ineq} holds. 
So we need only prove \eqref{nahm-ineq} in the case that 
$\dim W_{1}/
C_{1}(W_{1}), \dim W_{2}/
C_{1}(W_{2})<\infty$, that is, in the case that 
$W_{1}$ and $W_{2}$ are $C_{1}$-cofinite $\N$-gradable weak $V$-modules. 
In particular, by conclusions of Proposition \ref{C-1-l.g.r} hold for $W_{1}$ and $W_{2}$. 

Let $M_{1}$ and $M_{2}$ be finite-dimensional graded subspaces of $W_{1}$ and $W_{2}$,
respectively,  such that 
$W_{1}=C_{1}(W_{1})\oplus M_{1}$ and $W_{2}=C_{1}(W_{2})\oplus M_{2}$.
Then $\dim (W_{1}/C_{1}(W_{1}))=\dim M_{1}$ and $\dim (W_{2}/C_{1}(W_{2}))=\dim M_{2}$.

By Proposition \ref{int-weak-generalized}, $W_{3}$
is a generalized $V$-module. In particular, $W_{3}$ is $\C$-graded and 
we have the contragredient $W_{3}'$ of 
$W_{3}$. 
Let $M_{3}$ be a graded subspace of $W_{3}$ 
such that $W_{3}=C_{1}(W_{3})\oplus M_{3}$. Then 
$W_{3}/C_{1}(W_{3})$ 
is isomorphic to $M_{3}$. 
We need only prove 
$\dim M_{3}\le \dim M_{1} \dim M_{2}$. 

Let 
$$C_{1}(W_{3})^{\bot}=\{w_{3}'\in W_{3}'\mid \langle w_{3}', 
C_{1}(W_{3})\rangle=0\}
\subset W_{3}'.$$
and let $M_{3}'$ be the graded dual of $M_{3}$ with respect to the 
$\C$-grading induced from the one on $W_{3}$. 
We define a linear map $r: C_{1}(W_{3})^{\bot}\to M_{3}'$ to 
be the restriction map sending elements of $C_{1}(W_{3})^{\bot}\subset W_{3}'$ 
to  their restrictions to $M_{3}$, that is, 
$(r(w_{3}'))(w_{3})=\langle w_{3}',  w_{3}\rangle$
for $w_{3}'\in C_{1}(W_{3})^{\bot}$ and $w_{3}\in M_{3}$. 
If $r(w_{3}')=0$ for $w_{3}'\in C_{1}(W_{3})^{\bot}$, 
then $\langle w_{3}', w_{3}\rangle=0$ for all $w_{3}\in M_{3}$. 
But by definition, $\langle w_{3}', C_{1}(W_{3})\rangle =0$. Since 
$W_{3}=C_{1}(W_{3})\oplus M_{3}$, we obtain
$\langle w_{3}', w_{3}\rangle=0$ for all $w_{3}\in W_{3}$. 
Thus $w_{3}'=0$, proving that $r$ is injective. 
Given $w_{3}'\in M_{3}'$, we extend it to an element
$\bar{w}_{3}'\in W_{3}'$ by 
$\langle \bar{w}_{3}', \tilde{w}_{3}+w_{3}\rangle=
\langle w_{3}', w_{3}\rangle$ for $\tilde{w}_{3}\in 
C_{1}(W_{3})$ and $w_{3}\in M_{3}$. By definition, 
$\bar{w}_{3}'\in C_{1}(W_{3})^{\bot}$ and $r(\bar{w}_{3}')=w_{3}'$, 
proving $r$ is surjective. 
We have proved that 
$r$ is a linear isomorphism. Therefore we need only prove that
$\dim C_{1}(W_{3})^{\bot}\le \dim M_{1} \dim M_{2}$.

Let $\Y$ be a surjective intertwining operator of type $\binom{W_{3}}{W_{1}W_{2}}$.
We have proved that for homogeneous $w_{1}\in W_{1}$ and $w_{2}\in W_{2}$,
$$\wt \Y_{n, k}(w_{1})w_{2}=\wt w_{1}-n-1+\wt w_{2}.$$ 
Then for homogeneous $w_{3}'\in W_{3}'$,
$$\langle w_{3}',  \Y(w_{1}, x)w_{2}\rangle
=\sum_{k=0}^{K}\langle w_{3}',  \Y_{\swt w_{1}+\swt w_{2}-\swt w_{3}'-1, k}
(w_{1})w_{2}\rangle
x^{-\swt w_{1}-\swt w_{2}+\swt w_{3}'}(\log x)^{k}.$$

Fix $z\in \C^{\times}$. 
We use $\log z$  to denote 
$\log |z|+i\arg z$, where $0\le \arg z<2\pi$. Then
$$\langle w_{3}' , \Y(w_{1}, z)w_{2}\rangle
=\sum_{k=0}^{K}\langle w_{3}',  \Y_{\swt w_{1}+\swt w_{2}-\swt w_{3}'-1, k}
(w_{1})w_{2}\rangle
e^{(-\swt w_{1}-\swt w_{2}+\swt w_{3}')\log z}(\log z)^{k}$$
is well defined. 
We define a linear map 
$f: C_{1}(W_{3})^{\bot}\to (M_{1}\otimes M_{2})^{*}$ by 
$$(f(w_{3}'))(w_{1}\otimes w_{2})
=\langle w_{3}', \Y(w_{1}, z)w_{2}\rangle$$
for $w_{1}\in M_{1}$, $w_{2}\in M_{2}$ and $w_{3}'\in C_{1}(W_{3})^{\bot}$. 
To prove $\dim C_{1}(W_{3})^{\bot}\le \dim M_{1} \dim M_{2}=\dim (M_{1}\otimes M_{2})^{*}$, 
we need only prove that $f$ is injective. 

Assume that $f(w_{3}')=0$ for an element $w_{3}'\in C_{1}(W_{3})^{\bot}$.
Then by the definition of $f$, for $w_{1}\in M_{1}$, $w_{2}\in M_{2}$,
\begin{equation}\label{int-w-1-w-2}
\langle w_{3}', \Y(w_{1}, z)w_{2}\rangle=0
\end{equation}
We now prove \eqref{int-w-1-w-2} for all $w_{1}\in W_{1}$, $w_{2}\in W_{2}$.
By Conclusion 3 of Proposition \ref{C-1-l.g.r}, there exist $h_{1}^{(1)}, \dots, h_{p}^{(1)},
h_{2}^{(1)}, \dots, h_{q}^{(2)}
\in \C$ such that  
$W_{1}=\coprod_{i=1}^{p}\coprod_{n\in h_{i}^{(1)}+\N}(W_{1})_{[n]}$ and 
$W_{2}=\coprod_{j=1}^{q}\coprod_{n\in h_{j}^{(2)}+\N}(W_{2})_{[n]}$. 
For $w_{1}\in \coprod_{i=1}^{p}
(W_{1})_{[h_{i}^{(1)}+n_{1}]}$ and $w_{2}\in \coprod_{j=1}^{q}
(W_{2})_{[h_{j}^{(2)}+n_{2}]}$, we use induction on $n_{1}+n_{2}$ to 
prove \eqref{int-w-1-w-2}.
When $n_{1}+n_{2}=0$, $n_{1}=n_{2}=0$. 
We have $w_{1}\in \coprod_{i=1}^{p}(W_{1})_{[h_{i}^{(1)}]}$ and $w_{1}\in 
 \coprod_{j=1}^{q}(W_{2})_{[h_{j}^{(2)}]}$. We can take $w_{1}$ and $w_{2}$ to be homogeneous.
Then there are $i$ and $j$ such that 
$w_{1}\in 
(W_{1})_{[h_{i}^{(1)}+n_{1}]}$ and $w_{2}\in 
(W_{2})_{[h_{j}^{(2)}+n_{2}]}$.
Since $W_{1}=C_{1}(W_{1})\oplus M_{1}$ and $W_{2}=C_{1}(W_{1})\oplus M_{2}$,
there exist homogeneous $u^{(k)}, v^{(l)}\in V$,
$\tilde{w}_{1}^{(k)} \in W_{1}$, $\tilde{w}_{2}^{(l)} \in W_{2}$, for $k=1, \dots, s$, $l=1, \dots, t$, 
$\tilde{w}_{1}\in M_{1}$, and
$\tilde{w}_{2}\in M_{2}$ such that 
$w_{1}=\sum_{k=1}^{s}u^{(k)}_{-1}\tilde{w}^{(k)}_{1}+\tilde{w}_{1}$ and 
$w_{2}=\sum_{l=1}^{t}v^{(l)}_{-1}\tilde{w}^{(l)}_{2}+\tilde{w}_{2}$.
But for $k=1, \dots, s$, 
$$\Re(\wt \tilde{w}^{(k)}_{1})<\wt u^{(k)}+\Re(\wt \tilde{w}^{(k)}_{1})
=\Re(\wt u^{(k)}_{-1}\tilde{w}^{(k)}_{1})
=\Re(\wt w_{1})=\Re(h_{i}^{(1)}).$$ 
So $u^{(k)}_{-1}\tilde{w}^{(k)}_{1}=0$ and $w_{1}=\tilde{w}_{1}\in M_{1}$.
Similarly, $w_{2}\in M_{2}$. In this case, \eqref{int-w-1-w-2} is true. 

Assume that when $n_{1}+n_{2}<m$, \eqref{int-w-1-w-2} is true. 
In the case $n_{1}+n_{2}=m$,  we still take $w_{1}\in (W_{1})_{[h_{i}^{(1)}+n_{1}]}$ and
still have 
$w_{1}=\sum_{k=1}^{s}u^{(k)}_{-1}\tilde{w}^{(k)}_{1}+\tilde{w}_{1}$
for homogeneous  $u^{(k)}\in V$,
$\tilde{w}_{1}^{(k)} \in W_{1}$ for $k=1, \dots, s$, and $\tilde{w}_{1}\in M_{1}$. 
In this case, 
$$\Re(\wt \tilde{w}^{(k)}_{1})<\wt u^{(k)}+\Re(\wt \tilde{w}^{(k)}_{1})
=\Re(\wt u^{(k)}_{-1}\tilde{w}^{(k)}_{1})
=\Re(\wt w_{1})=\Re(h_{i}+n_{1}).$$ 
So $\tilde{w}^{(k)}_{1}\in \coprod_{m_{1}<n_{1}}
(W_{1})_{[h_{i}^{(1)}+m_{1}]}$
Similarly, we still take $w_{2}\in (W_{2})_{[h_{j}^{(2)}+n_{2}]}$ and still have
$w_{2}=\sum_{l=1}^{t}v^{(l)}_{-1}\tilde{w}^{(l)}_{2}+\tilde{w}_{2}$
for homogeneous  $v^{(l)}\in V$,
$\tilde{w}_{2}^{(l)} \in W_{1}$ for $l=1, \dots, t$, and $\tilde{w}_{2}\in M_{2}$. 
Then $\tilde{w}^{(l)}_{2}\in \coprod_{m_{2}<n_{2}}
(W_{2})_{[h_{j}^{(2)}+m_{2}]}$.
Using the Jacobi identity and the fact that 
$\langle w_{3}', C_{1}(W_{3})\rangle=0$, we see that 
$\langle w_{3}',  \Y(u^{(k)}_{-1}\tilde{w}^{(k)}_{1}, x)w_{2}\rangle$ 
is a linear combination of formal series in $x$ 
of the form $\langle w_{3}', \Y(\tilde{w}^{(k)}_{1}, x)\hat{w}_{2}\rangle$, where 
$\hat{w}_{2}\in \coprod_{\hat{n_{2}}\le n_{2}}
(W_{2})_{[h_{j}^{(2)}+\hat{n}_{2}]}$ with Laurent polynomial 
in $x$ as coefficients. Since $\tilde{w}^{(k)}_{1}\in
 \coprod_{m_{1}<n_{1}}
(W_{1})_{[h_{i}^{(1)}+m_{1}]}$ and $\hat{w}_{2}\in \coprod_{\hat{n_{2}}\le n_{2}}
(W_{2})_{[h_{j}^{(2)}+\hat{n}_{2}]}$ and $m_{1}+\hat{n}_{2}<n_{1}+n_{2}$,
by induction assumption, $\langle w_{3}', \Y(\tilde{w}^{(k)}_{1}, z)\hat{w}_{2}\rangle=0$.
So $\langle w_{3}',  \Y(u^{(k)}_{-1}\tilde{w}^{(k)}_{1}, z)w_{2}\rangle=0$. 
Similarly, 
$\langle w_{3}',  \Y(\tilde{w}_{1}, x)v^{(l)}_{-1}\tilde{w}^{(l)}_{2}\rangle$ 
is a linear combination of formal series in $x$
of the form $\langle w_{3}', \Y(\hat{w}_{1}, x)\tilde{w}^{(l)}_{2}\rangle$, where 
$\hat{w}_{1}\in \coprod_{\hat{n}_{1}\le n_{1}}
(W_{1})_{[h_{i}^{(1)}+\hat{n}_{1}]}$ with Laurent polynomials
in $x$ as coefficients.  Since $\hat{w}_{1}\in \coprod_{\hat{n}_{1}\le n_{1}}
(W_{1})_{[h_{i}^{(1)}+\hat{n}_{1}]}$ and $\tilde{w}^{(l)}_{2}\in \coprod_{m_{2}<n_{2}}
(W_{2})_{[h_{j}^{(2)}+m_{2}]}$ and $\hat{n}_{1}+m_{2}<n_{1}+n_{2}$,
by induction assumption, $\langle w_{3}', \Y(\hat{w}_{1}, z)\tilde{w}^{(l)}_{2}\rangle=0$.
So $\langle w_{3}',  \Y(\tilde{w}_{1}, x)v^{(l)}_{-1}\tilde{w}^{(l)}_{2}\rangle=0$.
Thus 
\begin{align*}
\langle w_{3}', \Y(w_{1}, z)w_{2}\rangle
&=\sum_{k=1}^{s}\langle w_{3}',  \Y(u^{(k)}_{-1}\tilde{w}^{(k)}_{1}, z)w_{2}\rangle
+\langle w_{3}',  \Y(\tilde{w}_{1}, z)w_{2}\rangle\nn
&=\langle w_{3}',  \Y(\tilde{w}_{1}, z)w_{2}\rangle\nn
&=\sum_{l=1}^{t}\langle w_{3}',  \Y(\tilde{w}_{1}, z)v^{(l)}_{-1}\tilde{w}^{(l)}_{2}\rangle
+\langle w_{3}',  \Y(\tilde{w}_{1}, z)v^{(l)}_{-1}\tilde{w}_{2}\rangle\nn
&=\langle w_{3}',  \Y(\tilde{w}_{1}, z)v^{(l)}_{-1}\tilde{w}_{2}\rangle\nn
&=0, 
\end{align*}
proving  \eqref{int-w-1-w-2} 
 for all $w_{1}\in W_{1}$ and 
$w_{2}\in W_{2}$. 

Using the $L(-1)$-derivative property for $\Y$, we have 
$$\langle w_{3}', \Y(w_{1}, \xi)w_{2}\rangle
=\langle w_{3}', \Y(e^{(\xi -z)L_{W_{1}}(-1)}w_{1}, z)w_{2}\rangle=0$$
on the region $|\xi-z|<|z|$. But $\langle w_{3}', \Y(w_{1}, \xi)w_{2}\rangle$
is an analytic function of $\xi$. This analytic function is equal to $0$ 
on a region means that it is equal to $0$ on its domain. So 
we obtain $\langle w_{3}', \Y(w_{1}, \xi)w_{2}\rangle=0$ on the region $\xi\ne 0$. 
Thus
$\langle w_{3}', \Y_{n, k}(w_{1})w_{2}\rangle=0$ for 
$w_{1}\in W_{1}$, 
$w_{2}\in W_{2}$,
$n\in \C$, $k=1, \dots, K$. 
Since $\Y$ is surjective, 
we must have $w_{3}'=0$, proving the injectivity of $f$. 
\end{proof}

Theorem \ref{n-W-1-W-2} has the following 
immediate consequence: 

\begin{cor}\label{image-C-1}
Let $W_{1}$ and $W_{2}$ be $C_{1}$-cofinite grading-restricted generalized
 $V$-modules (or equivalenetly,  $C_{1}$-cofinite $\N$-gradable 
 weak $V$-modules). Then 
 a weak surjective product 
of $W_{1}$ and $W_{2}$ is a $C_{1}$-cofinite 
generalized $V$-module. In particular, a lower-bounded generalized 
surjective product  of $W_{1}$ and $W_{2}$ is a $C_{1}$-cofinite  
grading-restricted generalized $V$-module.
\end{cor}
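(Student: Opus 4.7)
The plan is to combine Theorem \ref{n-W-1-W-2}, Proposition \ref{int-weak-generalized}, and Proposition \ref{C-1-l.g.r}. First, I would observe that any grading-restricted generalized $V$-module is lower-bounded and $\C$-graded, and hence admits a compatible $\N$-grading (for example, by grouping together weight spaces whose weight real parts lie in successive unit intervals starting from a lower bound). So $W_1$ and $W_2$ can be regarded as $C_1$-cofinite $\N$-gradable weak $V$-modules, and in particular $\dim W_1/C_1(W_1)$ and $\dim W_2/C_1(W_2)$ are both finite.

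Next, let $W_3$ be a weak surjective product of $W_1$ and $W_2$. Applying Theorem \ref{n-W-1-W-2} directly gives
\[
\dim W_3/C_1(W_3) \;\le\; \dim W_1/C_1(W_1) \cdot \dim W_2/C_1(W_2) \;<\; \infty,
\]
so $W_3$ is $C_1$-cofinite. Proposition \ref{int-weak-generalized} then upgrades $W_3$ from a weak $V$-module to a generalized $V$-module, which proves the first assertion.

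For the second assertion, suppose $W_3$ is in addition a lower-bounded generalized surjective product. Since $W_3$ is lower-bounded and $\C$-graded, I can equip it with a compatible $\N$-grading $W_3 = \coprod_{n \in \N} (W_3)_{\llfloor n \rrfloor}$ as above, so that $W_3$ becomes a $C_1$-cofinite $\N$-gradable weak $V$-module. Proposition \ref{C-1-l.g.r} then yields that each $(W_3)_{\llfloor n \rrfloor}$ is finite-dimensional and that the weights of homogeneous elements lie in finitely many cosets of $\Z$ in $\C$. Since each $\C$-graded weight space $(W_3)_{[n]}$ is contained in some finite-dimensional $\N$-graded component, every weight space of $W_3$ is finite-dimensional, so $W_3$ is grading-restricted.

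The only non-routine point in carrying this out is verifying that the $\N$-grading one constructs on a lower-bounded generalized $V$-module is genuinely compatible with the vertex operator action; but this is automatic because the $V$-action shifts weights by the integer $\wt v - m - 1$, so it shifts the real parts of weights by an integer, preserving the unit-interval partition. Everything else is a direct citation of the results already established above.
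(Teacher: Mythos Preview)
Your proof is correct and follows essentially the same route as the paper: both combine Proposition~\ref{int-weak-generalized}, Theorem~\ref{n-W-1-W-2}, and Proposition~\ref{C-1-l.g.r} in the obvious way. The paper is terser---it takes the equivalence ``$C_1$-cofinite grading-restricted generalized $V$-module $\Leftrightarrow$ $C_1$-cofinite $\N$-gradable weak $V$-module'' as given (it is stated parenthetically in the corollary itself) and invokes Conclusion~3 of Proposition~\ref{C-1-l.g.r} directly for the grading-restriction---whereas you spell out the unit-interval $\N$-grading construction explicitly and route the second assertion through Conclusion~1; these are cosmetic differences only.
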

\begin{proof}
Since $W_{1}$ and $W_{2}$ are generalized $V$-modules, 
by Proposition \ref{int-weak-generalized}, a weak surjective product 
$W_{3}$ of $W_{1}$ and $W_{2}$ is a generalized $V$-module. 
Since $W_{1}$ and $W_{2}$ are $C_{1}$-cofinite, by \eqref{nahm-ineq}
in Theorem \ref{n-W-1-W-2}, 
$\dim (W_{3}/C_{3}(W_{3}))\le \dim (W_{1}/
C_{1}(W_{1}))\dim (W_{2}/
C_{1}(W_{2}))<\infty$. So $W_{3}$ is a $C_{1}$-cofinite 
generalized $V$-module. 

In the case that $W_{3}$ is lower-bounded generalized surjective product  
of $W_{1}$ and $W_{2}$, since $W_{3}$ is also $C_{1}$-cofinite, it is
grading-restricted (in fact quasi-finite-dimensional) 
by Conclusion 3 of Proposition \ref{C-1-l.g.r}.
\end{proof}

\begin{rema}
{\rm Miyamoto proved the following result in \cite{M}: 
Let $W_{1}$ and $W_{2}$ be $C_{1}$-cofinite $\N$-gradable 
weak $V$-modules. Then there exists $d_{W_{1}, W_{2}}\in \N$
such that  for any $\N$-gradable weak surjective product 
$W_{3}$ of $W_{1}$ and $W_{2}$,
$\dim (W_{3}/
C_{1}(W_{3}))\le d_{W_{1}, W_{2}}$. We can obtain this result 
from Theorem \ref{n-W-1-W-2} by taking
$d_{W_{1}, W_{2}}= \dim (W_{1}/
C_{1}(W_{1}))\dim (W_{2}/
C_{1}(W_{2}))$.   Also in this result 
in \cite{M}, $W_{3}$ is an $\N$-gradable weak $V$-nodule 
while in Theorem \ref{n-W-1-W-2}, $W_{3}$ is a weak $V$-module. 
In fact, Creutzig, McRae and Yang 
generalized this result in \cite{M} to the case that $W_{3}$ is a  generalized $V$-module in 
Corollary 2.14 in \cite{CMY2}.  Combining 
Proposition \ref{int-weak-generalized} above and 
Corollary 2.14 in \cite{CMY2}, this result in \cite{M} is immediately 
generalized to the case that $W_{3}$ is a weak $V$-module. 
Also, the proof of this result  in \cite{M}  uses differential equations, which 
are equivalent to the special case  of the differential equations
derived in \cite{H-diff-eqn} for one intertwining operator.
The proof of Corollary 2.14 in \cite{CMY2} uses this result  in \cite{M}
and, in particular,  also uses differential equations  implicitly. 
The proof of Theorem \ref{n-W-1-W-2} does not  use differential equations at all.
On the other hand, as mentioned above, McRae and Sopin observed
in \cite{MS}  that the inequality of Nahm can be obtained using the 
number of independent solutions of the differential equations in 
\cite{M}. }
\end{rema}

\section{Proof of the main theorem}

In this section, we take $\mathcal{C}$ to be the category of $C_{1}$-cofinite 
grading-restricted generalized $V$-modules. 
We verify  in this section that  $\mathcal{C}$ satisfies Assumptions 
1, 4,  6, 8, 9, 11, 12 given in Section 2. Then by Theorem \ref{1478910-vtc},
we obtain Theorem \ref{main}.

For this category $\mathcal{C}$,  Assumption
1 is by definition satisfied. Since a grading-restricted 
M\"{o}bius vertex algebra is $C_{1}$-cofinite, 
Assumption 4 is also satisfied. 
We still need to verify Assumptions 6, 8, 9, 11, and 12. 

We first verify Assumptions 6 and 8. 

\begin{prop}
The convergence and extension property for  products of intertwining operators in $\mathcal{C}$
holds. Moreover, the products of more than two
intertwining operators are absolutely convergent in the corresponding region
and can be analytically extended to multivalued analytic 
functions defined on the region where the complex variables 
in the intertwining operators 
are not equal $0$ and each other.
\end{prop}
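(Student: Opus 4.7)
The plan is to deduce both assertions from the systems of differential equations produced by Theorems \ref{sys1} and \ref{sys2}, exactly as in \cite{H-diff-eqn} and \cite{HLZ7}. The key enabling observation is that, although the contragredient of an object of $\mathcal{C}$ need not lie in $\mathcal{C}$, it is still quasi-finite-dimensional: Conclusion 3 of Proposition \ref{C-1-l.g.r} says that a $C_{1}$-cofinite grading-restricted generalized $V$-module decomposes into finite-dimensional generalized $L(0)$-eigenspaces supported in finitely many cosets of $\Z$ in $\C$, and this property is preserved under the graded dual. Hence for any objects $W_{1}, W_{2}, W_{3}, W_{4}$ of $\mathcal{C}$, both $W_{4}'$ and $W_{3}$ are quasi-finite-dimensional generalized $V$-modules, so the hypotheses of Theorem \ref{sys1} are met, with the intermediate module appearing in a product of intertwining operators allowed to be any lower-bounded generalized $V$-module.

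For the first assertion, given intertwining operators $\mathcal{Y}_{1}, \mathcal{Y}_{2}$ of types $\binom{W_{4}}{W_{1}\widetilde{W}}$ and $\binom{\widetilde{W}}{W_{2}W_{3}}$, Theorem \ref{sys1} furnishes a pair of linear ODEs in $z_{1}$ and $z_{2}$ with coefficients in $\C[z_{1}^{\pm 1}, z_{2}^{\pm 1}, (z_{1}-z_{2})^{-1}]$, having regular singular points at both $(z_{1}, z_{2}) = (\infty, 0)$ and $(z_{1}, z_{2}) = (z_{2}, \infty)$, and of which the formal series $\langle w_{4}', \mathcal{Y}_{1}(w_{1}, z_{1})\mathcal{Y}_{2}(w_{2}, z_{2}) w_{3}\rangle$ is a formal solution. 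I would then invoke the classical Frobenius theory for regular singular points, exactly as in \cite{H-diff-eqn} and \cite{HLZ7}: the series converges absolutely on $|z_{1}| > |z_{2}| > 0$, analytically continues through the non-singular locus, and near $z_{2} = 0$ admits a representation as a finite sum of terms of the form $z_{2}^{\alpha}(\log z_{2})^{k} g_{\alpha, k}(z_{1}, z_{2})$ with the $g_{\alpha, k}$ analytic in $|z_{2}| < |z_{1}|$. This is precisely the convergence and extension property for products of intertwining operators formulated in \cite{HLZ6}, so Assumption 8 is verified.

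The second assertion is handled in the same way using Theorem \ref{sys2}. For $n$ intertwining operators with intermediate modules arbitrary lower-bounded generalized $V$-modules and extreme modules $W_{0}' $ and $W_{n+1}$ objects of $\mathcal{C}$ (again quasi-finite-dimensional by the observation above), one obtains a system of $n$ linear ODEs in $z_{1}, \dots, z_{n}$ in which every possible singularity $z_{i} = 0$, $z_{i} = \infty$, or $z_{i} = z_{j}$ is regular. Solving each variable in turn by Frobenius theory and patching via analytic continuation yields absolute convergence of the iterated product on $|z_{1}| > \cdots > |z_{n}| > 0$ together with analytic continuation to a multivalued analytic function on the configuration space where no $z_{i}$ vanishes and no two $z_{i}$ coincide. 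In short, the hard work was already done in Section 3: once Theorems \ref{sys1} and \ref{sys2} are available for modules whose extreme inputs are merely quasi-finite-dimensional, what remains is a direct and entirely standard invocation of the theory of ODEs with regular singular points.
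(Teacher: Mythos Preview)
Your approach is essentially the same as the paper's: both deduce the two assertions directly from Theorems \ref{sys1} and \ref{sys2} together with the standard theory of linear ODEs with regular singular points, citing \cite{H-diff-eqn} and \cite{HLZ7} for the passage from the differential equations to the convergence and extension property. Your explicit observation that contragredients of objects of $\mathcal{C}$ are quasi-finite-dimensional, so that the hypotheses of Theorems \ref{sys1} and \ref{sys2} are met, is a useful detail the paper leaves implicit. One small slip: in the convergence and extension property for \emph{products} the relevant expansion is in powers and logarithms of $z_{1}-z_{2}$ on the region $|z_{2}|>|z_{1}-z_{2}|>0$ (corresponding to the regular singularity at $z_{1}=z_{2}$), not in $z_{2}$ near $z_{2}=0$ as you wrote; the latter would be the iterate version.
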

\begin{proof}
The first conclusion (Assumption 8) follows from Theorem \ref{sys1}. The proof is the same 
as the proof of Theorem 3.5 in \cite{H-diff-eqn} in the 
case that the intertwining operators do not contain 
logarithms of the variables and the observation in the proof of 
Theorem 11.8 in \cite{HLZ7} that the proof of Theorem 3.5 in \cite{H-diff-eqn}
still works in the case of logarithmic intertwining 
operators. 

The second conclusion (Assumption 6) follows immediately from Theorem \ref{sys2} by
the theory of differential equations of regular singular points.
\end{proof}

\begin{rema}
{\rm In fact, the main difficult part of the proof 
of Theorem 3.5 in \cite{H-diff-eqn} is the proof of the result 
that  in the 
semisimple case, there is no logarithm after the products 
of intertwining operators are analytically extended to 
the region for the iterates of intertwining operators and expanded as
series in powers and logarithms of the variables. 
So the proof in our logarithmic case is in fact simpler. }
\end{rema}

We now verify Assumption 9 using Proposition \ref{C-1-l.g.r}.

\begin{prop}
For any object $W$ in $\mathcal{C}$, the weights of homogeneous 
elements of $W$ are congruent to finitely many complex numbers modulo $\Z$
and there exists $K\in \Z_{+}$ such that 
$L_{W}(0)_{N}^{K}=0$ where $L_{W}(0)_{N}$ is the nilpotent part of 
$L_{W}(0)$. 
\end{prop}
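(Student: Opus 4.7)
The plan is to leverage Proposition \ref{C-1-l.g.r}, which already does most of the work for $C_{1}$-cofinite $\N$-gradable weak $V$-modules, after first decomposing $W$ into finitely many pieces indexed by congruence classes of $L(0)$-eigenvalues modulo $\Z$. Since $V$ is a grading-restricted M\"{o}bius vertex algebra, homogeneous $v\in V$ have integer weights, so for each $k\in\Z$ the operator $v_{k}$ shifts generalized $L_{W}(0)$-eigenvalues by the integer $\wt v - k - 1$. Consequently, setting $W^{[h]}$ to be the span of generalized eigenvectors whose eigenvalues are congruent to $h$ modulo $\Z$ yields a direct-sum decomposition $W=\bigoplus_{[h]\in\C/\Z}W^{[h]}$ into generalized $V$-submodules; each summand is lower-bounded because $W$ is.

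The first half of the proposition is proved by showing that only finitely many summands are nonzero. Because $v_{-1}$ preserves the congruence class, $C_{1}(W)=\bigoplus_{[h]}C_{1}(W^{[h]})$, hence $W/C_{1}(W)=\bigoplus_{[h]}W^{[h]}/C_{1}(W^{[h]})$. Any nonzero lowest-weight element of $W^{[h]}$ cannot lie in $C_{1}(W^{[h]})$ (elements of $C_{1}(W^{[h]})$ have strictly larger real weight by at least $1$), so each nonzero summand contributes at least one dimension; $C_{1}$-cofiniteness of $W$ then forces only finitely many $W^{[h_{1}]},\ldots,W^{[h_{k}]}$ to be nonvanishing, giving the congruence-class statement. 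Each $W^{[h_{i}]}$ is then a direct summand of $W$, so is $C_{1}$-cofinite, and its weights differ by integers, so after a uniform shift it is $\N$-gradable compatibly, allowing us to apply Proposition \ref{C-1-l.g.r}.

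For the uniform nilpotency of $L_{W}(0)_{N}$, the plan is to use that $L_{W}(0)_{N}$ commutes with $v_{-1}$ for every homogeneous $v\in V$. Indeed, the commutator formula gives $[L_{W}(0),v_{-1}]=(\wt v)\,v_{-1}$, and the computation above shows $v_{-1}$ maps the generalized eigenspace of eigenvalue $\lambda$ into that of eigenvalue $\lambda+\wt v$, which yields $[L_{W}(0)_{S},v_{-1}]=(\wt v)\,v_{-1}$ as well; subtracting gives $[L_{W}(0)_{N},v_{-1}]=0$. By Proposition \ref{C-1-l.g.r}(2) applied to each $W^{[h_{i}]}$, there is a finite-dimensional subspace $M_{i}\subset W^{[h_{i}]}$ such that $W^{[h_{i}]}$ is spanned by iterated $v_{-1}$-actions on elements of $M_{i}$. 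The operator $L_{W}(0)_{N}$ restricted to the finite-dimensional space $\bigoplus_{i}M_{i}$ is nilpotent, say of order $K$; since $L_{W}(0)_{N}^{K}$ commutes past all $v_{-1}$-strings, $L_{W}(0)_{N}^{K}=0$ on all of $W$.

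The main obstacle is really the finiteness assertion in the first half: one must verify that the decomposition by congruence class is compatible with the $C_{1}$-quotient and that every nonzero summand survives that quotient. Once this bookkeeping is in place, the rest is routine, and the crucial commutation identity $[L_{W}(0)_{N},v_{-1}]=0$ is what allows nilpotency on a finite-dimensional generating subspace to propagate to all of $W$.
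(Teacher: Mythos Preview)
Your proof is correct and follows essentially the same approach as the paper: both invoke Proposition~\ref{C-1-l.g.r} for the structural statements and then use the commutation $[L_{W}(0)_{N},v_{-1}]=0$ to propagate nilpotency from a finite-dimensional generating subspace to all of $W$. The only minor differences are expositional: you set up the congruence-class decomposition explicitly before applying Proposition~\ref{C-1-l.g.r} to each piece (the paper applies it directly to $W$, implicitly using that a lower-bounded generalized module is $\N$-gradable), and you derive $[L_{W}(0)_{N},v_{-1}]=0$ from first principles, whereas the paper cites \cite{H-exist-twisted-mod} for the stronger fact that $L_{W}(0)_{N}$ commutes with all of $Y_{W}(v,x)$.
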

\begin{proof}
The first part of this result follows immediately from 
concludion 3 of Proposition \ref{C-1-l.g.r}.
So we need to prove only the second part. 

By conclusion 2 of Proposition \ref{C-1-l.g.r}, there is a finite-dimensional 
subspace $M$ of $W$
such that $W$ is spanned by elements of the form 
$v^{(1)}_{-1}\cdots v^{(i)}_{-1}w$ for $i\in \N$,
$v^{(1)}, \dots, v^{(i)}\in V$ and $w\in M$. Let $w_{1}, \dots, w_{k}$ be a basis
of $M$. Then there is $K_{i}\in \Z_{+}$ for $i=1, \dots, k$ such that 
$L_{W}(0)_{N}^{K_{i}}w_{i}=0$. Let $K=\max(K_{1}, \dots, K_{k})$. 
We now show that $L_{W}(0)_{N}^{K}=0$.

For any element $w\in M$, by the definition of $K$, we have 
$L_{W}(0)_{N}^{K}w=0$. To prove $L_{W}(0)_{N}^{K}w=0$
for general $w\in W$, we first notice that $L_{W}(0)_{N}$ and 
the vertex operator $Y_{W}(v, x)$ for $v\in V$ commute.
In fact, this is the special case of (4.2) in \cite{H-exist-twisted-mod}
when the twisted module is an untwisted $V$-module (twisted by the identity 
on $V$). In the untwisted case, the vertex operator $Y_{W}(v, x)$ 
does not contain $\log x$ and thus the commutator of 
$L_{W}(0)_{N}$ and $Y_{W}(v, x)$ is $0$. In particular, 
$L_{W}(0)_{N}$ and $v_{-1}$ commute. Thus for an element 
$v^{(1)}_{-1}\cdots v^{(i)}_{-1}w$ for $i\in \N$,
$v^{(1)}, \dots, v^{(i)}\in V$ and $w\in M$, we have 
$$L_{W}(0)_{N}^{K}(v^{(1)}_{-1}\cdots v^{(i)}_{-1}w)
=v^{(1)}_{-1}\cdots v^{(i)}_{-1}L_{W}(0)_{N}^{K}w=0.$$
Since $W$ is spanned by such elements, we obtain $L_{W}(0)_{N}^{K}=0$.
\end{proof}

For Assumption 11, we need only verify 
that $\mathcal{C}$ is closed under $P(z)$-tensor products
for some $z\in \C^{\times}$. 
To verify  this part of Assumption 11 and Assumption 12,
we  prove a general result on  grading-restricted generalized surjective product 
of $W_{1}$ and $W_{2}$ and lower-bounded generalized $V$-modules
contained in  $(W_{1}\otimes W_{2})^{*}$. Then 
Assumptions 11 and 12 are immediate consequences. 
This result can also be proved by modifying the relevant 
results and their proofs in \cite{CJORY}, \cite{CY},  \cite{Mc}
and \cite{CMOY}. 

\begin{thm}\label{gradings-submod}
Let $W_{1}$ and $W_{2}$ be $C_{1}$-cofinite grading-restricted generalized
 $V$-modules.  
\begin{enumerate}
\item  There are $h_{1}, \dots, h_{k}\in \C$, depending only on $W_{1}$ and $W_{2}$,
 such that for any grading-restricted 
generalized surjective product $W_{0}$ of $W_{1}$ and $W_{2}$,  we have
$$W_{0}=\coprod_{i=1}^{k}\coprod_{n\in \N}
(W_{0})_{[h_{i}+n]}.$$ 

\item Let $W$ be a lower-bounded generalized $V$-module 
contained in  $(W_{1}\otimes W_{2})^{*}$ (equipped with the action 
$Y_{P(z)}'(\cdot, x)$ of $V$ and the corresponding actions of $L(-1)$, $L(0)$ and $L(1)$ 
on $(W_{1}\otimes W_{2})^{*}$). 
Then  $W'$ is grading-restricted  generalized
surjective product of $W_{1}$ and $W_{2}$.
In particular,  $W\subset W_{1}\hboxtr_{P(z)}W_{1}$,
\begin{equation}\label{W-lambda-1}
W=\coprod_{i=1}^{k}\coprod_{n\in \N}
(_{[h_{i}+n]},
\end{equation}
where $h_{1}, \dots, h_{k}$ are complex numbers given above, and
$W'$ is $C_{1}$-cofinite, 
\end{enumerate}
\end{thm}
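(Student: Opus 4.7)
The plan is to prove both parts together by establishing a duality between grading-restricted generalized surjective products of $W_1, W_2$ and lower-bounded $V$-submodules of $(W_1\otimes W_2)^*$. For Part 2, given a lower-bounded generalized $V$-module $W\subset(W_1\otimes W_2)^*$, every $\lambda\in W$ satisfies the $P(z)$-compatibility condition (since $W$ is $V$-closed) and the $P(z)$-local grading restriction condition (since $W$ is lower bounded), so by Theorem 5.49 of \cite{HLZ4} the generated submodule $W_\lambda\subset W$ is grading restricted. The evaluation pairing $I_\lambda\colon W_1\otimes W_2\to\overline{W_\lambda'}$ given by $I_\lambda(w_1\otimes w_2)(\mu)=\mu(w_1\otimes w_2)$ is a $P(z)$-intertwining map whose associated intertwining operator of type $\binom{W_\lambda'}{W_1 W_2}$ is surjective because evaluations separate points; Corollary \ref{image-C-1} then puts $W_\lambda'$ in $\mathcal{C}$. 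This verifies the hypothesis of Theorem \ref{characterization}, yielding $W\subset W_1\hboxtr_{P(z)}W_2$. Grading restriction of $W$ follows from that of $W_1\hboxtr_{P(z)}W_2$ (Proposition \ref{HLZ4-Prop5.37}, applied once $(W_1\hboxtr_{P(z)}W_2)'$ is seen to lie in $\mathcal{C}$ by realizing it as a $C_1$-cofinite surjective product), and the same construction applied to $W$ itself exhibits $W'$ as a $C_1$-cofinite grading-restricted generalized $V$-module.

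For Part 1, let $W_0$ be any grading-restricted generalized surjective product via a surjective intertwining operator $\Y$. Corollary \ref{image-C-1} and Proposition \ref{C-1-l.g.r} already give a decomposition $W_0=\coprod_i\coprod_{n\in\N}(W_0)_{[h_i^{W_0}+n]}$ with a priori $W_0$-dependent $h_i^{W_0}$. Uniformity is extracted by combining the weight identity $\wt\Y_{n,k}(w_1)w_2=\wt w_1+\wt w_2-n-1$ from the proof of Proposition \ref{int-weak-generalized} with the generation of $W_1,W_2$ from finite-dimensional subspaces $M_1,M_2$ under the $v_{-1}$-action furnished by Proposition \ref{C-1-l.g.r}(2). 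Since the $V$-action shifts weights only by integers, the weights of $W_0$ take the form $\mu_1+\mu_2-n-1+m$ with $\mu_i$ a weight of some element of $M_i$, $m\in\Z$, and $n$ an exponent appearing in $\Y(w_1,x)w_2$ for $w_1\in M_1$, $w_2\in M_2$. Thus the task reduces to a uniform upper bound on $\Re(n)$ for such triples, which I obtain from the single-variable specialization of Theorem \ref{sys1}: the pairing $\langle w_0',\Y(w_1,z)w_2\rangle$ satisfies an ODE with regular singular point at $z=0$, whose indicial roots restrict the exponents $-n-1$ appearing and depend only on $W_1, W_2$ once $M_1,M_2$ are fixed.

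The principal obstacle is precisely this last step: ensuring the indicial roots can be chosen independently of $W_0$. My plan is to revisit the proof of Theorem \ref{sys1}, where the ODE coefficients come from a finitely-generated $R$-module $T_{N_0,N_3}/J_{N_0,N_3}$ with generators selected from $M_1\otimes M_2$ together with finite-dimensional subspaces of the modules placed at $0$ and $\infty$; with the $M_i$'s fixed in advance, the indicial polynomial becomes a function of $W_1,W_2$ alone, giving the desired uniform bound. Once Part 1 is established, the decomposition (\ref{W-lambda-1}) for $W$ in Part 2 follows by applying Part 1 to the surjective product $W'$ and dualizing each weight space, so that $W_{[h_i+n]}\cong((W')_{[h_i+n]})^*$ carries the asserted decomposition.
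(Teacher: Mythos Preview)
Your approach to Part 1 has a genuine gap at exactly the point you flag as ``the principal obstacle.'' The differential equation in Theorem \ref{sys1} (or its one-variable specialization) is built from the $R$-module $T_{N_0,N_3}/J_{N_0,N_3}$, and that module explicitly involves a truncation of the module placed at $\infty$---here $W_0'$, which changes with $W_0$. Fixing $M_1,M_2$ does \emph{not} eliminate this dependence: the relations that produce the ODE coefficients are linear relations among cosets $[w_0'\otimes L(-1)^j w_1\otimes w_2]$, and the element $w_0'$ sits in the tensor factor throughout. So the indicial polynomial genuinely depends on $W_0$, and your proposed uniformity fails. The paper circumvents this entirely with an algebraic argument: by Nahm's inequality (Theorem \ref{n-W-1-W-2}) the numbers $\dim W_0/C_1(W_0)$ are bounded, so there is a surjective product $W_0^{\mathrm{max}}$ realizing the maximum; for any other $W_0$ one forms the sum of $W_0'$ and $(W_0^{\mathrm{max}})'$ inside $(W_1\otimes W_2)^*$, whose contragredient $W'$ is again a surjective product with $\dim W'/C_1(W')=\dim W_0^{\mathrm{max}}/C_1(W_0^{\mathrm{max}})$ by maximality. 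This equality forces the kernel of the surjection $W'\to W_0^{\mathrm{max}}$ to lie in $C_1(W')$, so every lowest-weight piece of $W'$ maps nontrivially into $W_0^{\mathrm{max}}$, pinning its weight to some $h_j+\N$. The $h_i$'s thus come from a single module $W_0^{\mathrm{max}}$, not from any analytic data.

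Your Part 2 outline has two further problems. First, the claim that $W$ lower bounded implies each $\lambda\in W$ satisfies the $P(z)$-local grading restriction condition is not justified: that condition already \emph{includes} finite-dimensionality of the homogeneous subspaces of $W_\lambda$, which is precisely what must be proved. Second, invoking grading restriction of $W_1\hboxtr_{P(z)}W_2$ via Proposition \ref{HLZ4-Prop5.37} is circular---in the paper that fact is a corollary of the very theorem you are proving. The paper's direct route avoids both issues: take the inclusion $i\colon W\hookrightarrow (W_1\otimes W_2)^*$, let $W_0\subset W'$ be the image of the associated intertwining map $i'$, observe $W_0$ is a lower-bounded surjective product (hence $C_1$-cofinite grading-restricted by Corollary \ref{image-C-1}), and then check the identity $i=I_0'\circ\chi'|_W$ to conclude $W\subset I_0'(W_0')$, which is grading-restricted.
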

\begin{proof}
We consider the set of $\dim (W_{0}/C_{1}(W_{0}))$ for all
grading-restricted 
generalized surjective product $W_{0}$ of $W_{1}$ and $W_{2}$.
Then by Theorem \ref{n-W-1-W-2},
this set of nonnegative integers 
is bounded from above by $\dim (W_{1}/C_{1}(W_{1}))\dim  (W_{2}/C_{1}(W_{2}))$. 
Hence there must be a maximum of this set. 
Let $W_{0}^{\mathrm{max}}$ be a grading-restricted 
generalized surjective product  of $W_{1}$ and $W_{2}$
such that $\dim W_{0}/C_{1}(W_{0})$
is the maximum of the set. Then by Proposition \ref{C-1-l.g.r}, 
there are 
$h_{1}, \dots, h_{k}\in \C$ such that 
$$W_{0}^{\mathrm{max}}=\coprod_{i=1}^{k}\coprod_{n\in \N}
(W_{0}^{\mathrm{max}})_{[h_{i}+n]}.$$ 

Given any grading-restricted 
generalized surjective product $W_{0}$ of $W_{1}$ and $W_{2}$, we have a 
surjective intertwining operator $\Y$ of type $\binom{W_{0}}{W_{1}W_{2}}$.
For any fixed $z\in \C^{\times}$, we have a $P(z)$-intertwining map 
$I_{\Y, 0}=\Y(\cdot, z)\cdot$ of type $\binom{W_{0}}{W_{1}W_{2}}$.
The $P(z)$-intertwining map $I_{\Y, 0}$ gives 
a $V$-module map $I_{\Y, 0}'$ from $W_{0}'$ to 
the generalized $V$-module $W_{1}\hboxtr_{P(z)}W_{2}$ (see Definition 5.3.1
in \cite{HLZ4}). 
Since the intertwining operator $\Y$ is surjective, this $V$-module 
map is injective.
The image of $W_{0}'$ under this $V$-module map 
is a grading-restricted generalized $V$-submdoule of 
$W_{1}\hboxtr_{P(z)}W_{2}$. This is also true for 
$W_{0}^{\mathrm{max}}$.
Let $W$ be the sum of the image of $W_{0}'$ under
the $V$-module map from $W_{0}'$ to  $W_{1}\hboxtr_{P(z)}W_{2}$ 
and the image of $(W_{0}^{\mathrm{max}})'$ under 
the $V$-module map from $(W_{0}^{\mathrm{max}})'$ to  
$W_{1}\hboxtr_{P(z)}W_{2}$. Then $W$ is 
also a grading-restricted generalized $V$-submodule
of $W_{1}\hboxtr_{P(z)}W_{2}$. Let 
$J: W\to W_{1}\hboxtr_{P(z)}W_{2}$ be the inclusion map. 
Then $J': W_{1}\otimes W_{2}\to \overline{W'}$ 
is a $P(z)$-intertwining map (see Section 5 of \cite{HLZ4}, especially 
Notation 5.25). The $P(z)$-intertwining map $J'$ gives a surjective
intertwining operator $\Y_{J', 0}$ (see (4.18) in \cite{HLZ3})
of type $\binom{W'}{W_{1}W_{2}}$. 
Since $W$ is grading-restricted, $W'$ is also 
grading-restricted. By Corollary \ref{image-C-1},
$W'$ is a $C_{1}$-cofinite grading-restricted 
generalized $V$-module and
$\dim W'/C_{1}(W')\le 
\dim W_{0}^{\mathrm{max}}/C_{1}(W_{0}^{\mathrm{max}})$. 

We now prove 
$$W'=\coprod_{i=1}^{k}\coprod_{n\in \N}
(W')_{[h_{i}+n]}.$$ 
By definition, we have an injective $V$-module map 
from $(W^{\mathrm{max}})'$ to $W$. 
Its adjoint is a surjective $V$-module map 
$f: W'\to W_{0}^{\mathrm{max}}$.
Since $f$ is a $V$-module map, 
$f(C_{1}(W'))\subset C_{1}(W_{0}^{\mathrm{max}})$. 
Then the map $f$ induces a surjective linear map 
$\bar{f}: W'/C_{1}(W')
\to W_{0}^{\mathrm{max}}/C_{1}(W_{0}^{\mathrm{max}})$. In particular, we have 
$\dim (W'/C_{1}(W'))\ge 
\dim (W_{0}^{\mathrm{max}}/C_{1}(W_{0}^{\mathrm{max}}))$. But we already 
have $\dim (W'/C_{1}(W'))\le
\dim (W_{0}^{\mathrm{max}}/C_{1}(W_{0}^{\mathrm{max}}))$. So we obtain 
$$\dim (W'/C_{1}(W'))=
\dim (W_{0}^{\mathrm{max}}/C_{1}(W_{0}^{\mathrm{max}})).$$ 
Thus $\bar{f}$ is injective and 
$\ker \bar{f}=0$. Since $\bar{f}$ is induced from $f$, we 
obtain $\ker f\subset C_{1}(W')$. 

Since $W'$ is a $C_{1}$-cofinite 
grading-restricted 
generalized $V$-module, by Proposition \ref{C-1-l.g.r},
there are $\tilde{h}_{1}, \dots, \tilde{h}_{l}\in \C$ 
such that 
$$W'=\coprod_{i=1}^{l}\coprod_{n\in \N}
(W')_{[\tilde{h}_{i}+n]},$$ 
where $(W')_{[\tilde{h}_{i}+n]}$ for $n\in \N$ 
are finite-dimensional.  Moreover, we can always find such 
$\tilde{h}_{1}, \dots, \tilde{h}_{l}\in \C$ 
such that $(W')_{[\tilde{h}_{i}]}\ne 0$ for 
$i=1, \dots, l$. 
Note that $(W')_{[\tilde{h}_{i}]}$ for $i=1, \dots, l$
cannot be contained in $C_{1}(W')$ and thus cannot be 
in $\ker f$. So for each $i$, nonzero elements of  
$(W')_{[\tilde{h}_{i}]}$ 
are mapped to nonzero homogeneous elements of $W_{0}^{\mathrm{max}}$. 
Hence there exist $1\le j\le k$ and $n\in \N$ such that 
$\tilde{h}_{i}=h_{j}+n$. 
Hence we obtain 
$$W'=\coprod_{i=1}^{k}\coprod_{n\in \N}
(W')_{[h_{i}+n]}.$$
As a consequence, the contragredient $W$ of $W'$
can also be written as 
$$W=\coprod_{i=1}^{k}\coprod_{n\in \N}
W_{[h_{i}+n]}.$$ 
Then as a $V$-submodule of $W$, $W_{0}'$ 
can be written as 
$$W_{0}'=\coprod_{i=1}^{k}\coprod_{n\in \N}
(W_{0}')_{[h_{i}+n]}.$$ 
Thus $W_{0}$ 
can be written as 
$$W_{0}=\coprod_{i=1}^{k}\coprod_{n\in \N}
(W_{0})_{[h_{i}+n]}.$$

We now prove the second part  of the theorem. 
Let $i: W\to (W_{1}\otimes W_{2})^{*}$ be the inclusion map.
Then we have a $P(z)$-intertwining map $i'$ of type $\binom{W'}{W_{1}W_{2}}$
(as in the proof of Theorem \ref{gradings-submod}, see Section 5 of \cite{HLZ4}). 
If $W$ is grading-restricted, then $W'$ is also grading-restricted and, 
as in the proof of the first part above (see 
 (4.18) in \cite{HLZ3}), 
the $P(z)$-intertwining map $i'$ gives a surjective
intertwining operator $\Y_{i', 0}$ of type $\binom{W'}{W_{1}W_{2}}$. Then 
$W'$ is a grading-restricted generalized surjective product of $W_{1}$ and $W_{2}$. 
So to prove the second part of the theorem, we need only 
prove that $W$ is grading-restricted. 

Let $W_{0}$ be the image of $i'$ (meaning the generalized $V$-submodule of 
$W'$ spanned by homogeneous components of elements of the form 
$i'(w_{1}\otimes w_{2})$ for $w_{1}\in W_{1}$ and $w_{2}\in W_{2}$). 
Then we obtain an $P(z)$-intertwining map $I_{0}$ of type $\binom{W_{0}}{W_{1}W_{2}}$.
This $P(z)$-intertwining map $I_{0}$  in turn gives a surjective
intertwining operator $\Y_{I_{0}, 0}$ of type $\binom{W_{0}}{W_{1}W_{2}}$
(as in the proof of the first part above, see (4.18) in \cite{HLZ3}).
Since $W_{0}$ is a generalized $V$-submodule of the lower-bounded generalized $V$-module
$W'$, it is also lower-bounded. 
By Corollary \ref{image-C-1} and Proposition \ref{C-1-l.g.r}, 
$W_{0}$ is a $C_{1}$-cofinite grading-restricted 
generalized $V$-module. 
Then the image $I'_{0}(W_{0}')$ 
of $I'_{0}: W_{0}'\to (W_{1}\otimes W_{2})^{*}$
is in $W_{1}\hboxtr_{P(z)}W_{2}$. If 
$I'(w_{0}')=0$ for $w_{0}'\in W_{0}'$, we have 
$$\langle w_{0}', I_{0}(w_{1}\otimes w_{2})\rangle
= (I'(w_{0}'))(w_{1}\otimes w_{2})=0$$
for all $w_{1}\in W_{1}$ and $w_{2}\in W_{2}$. 
But $W_{0}$ is the image of $I_{0}$. So we obtain $w_{0}'=0$,
showing that $I_{0}'$ is injective. 
Then $I'_{0}(W_{0}')$ is equivalent to $W_{0}'$ as a generalized $V$-module. 
In particular, $I'_{0}(W_{0}')$ is a grading-restricted generalized $V$-submodule
of $W_{1}\hboxtr_{P(z)}W_{2}$ whose contragredient is $C_{1}$-cofinite. 
Let $\chi: W_{0}\to W'$ be the inclusion map.
Then by definition, we have $i'=\chi\circ I_{0}$. So we obtain 
$i''=I_{0}'\circ \chi'$, where $i'': W''\to (W_{1}\otimes W_{2})^{*}$ is the adjoint of 
$i'$. Since $W$ can be embedded into $W''$, we can identify 
$W$ with a generalized $V$-submodule of $W''$. Hence we have
$i=I_{0}'\circ \chi'|_{W}$, where $\chi'|_{W}$ is the restriction of 
$\chi'$ to $W$. Thus $W=i(W)=I_{0}'(\chi'(W))\subset I_{0}'(W_{0}')$. 
Since $I_{0}'(W_{0}')$ is grading restricted, $W$ is also grading-restricted. 
\end{proof}

It is now easy to verify that $\mathcal{C}$ is closed under 
$P(z)$-tensor products for any $z\in \C^{\times}$ and thus Assumption 11 is satisfied. 

\begin{cor}
Let $W_{1}$ and $W_{2}$ be $C_{1}$-cofinite grading-restricted 
generalized $V$-modules. Then the contragredient 
$W_{1}\boxtimes_{P(z)}W_{2}$
of $W_{1}\hboxtr_{P(z)}W_{2}$
is a $C_{1}$-cofinite grading-restricted generalized $V$-module. In particular,
 $\mathcal{C}$ is closed under the $P(z)$-tensor product.
\end{cor}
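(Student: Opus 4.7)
The plan is to apply Theorem \ref{gradings-submod}(2) directly to $W := W_1 \hboxtr_{P(z)} W_2$ itself, viewed as a lower-bounded generalized $V$-submodule of $(W_1 \otimes W_2)^{*}$ equipped with the action $Y'_{P(z)}(\cdot,x)$. Once lower-boundedness is verified, the theorem immediately yields that $W' = W_1\boxtimes_{P(z)} W_2$ is a $C_1$-cofinite grading-restricted generalized $V$-module, hence an object of $\mathcal{C}$. The closure assertion will then follow from the first part of Proposition \ref{HLZ4-Prop5.37}, which says that once the contragredient of $W_1\hboxtr_{P(z)} W_2$ is in $\mathcal{C}$, the $P(z)$-tensor product in $\mathcal{C}$ exists and equals this contragredient.

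First I would check that $W_1\hboxtr_{P(z)} W_2$ is a generalized $V$-submodule of $(W_1\otimes W_2)^{*}$. By Definition \ref{hboxtr} it is the union of images $I'(W')$ as $(W,I)$ ranges over $P(z)$-products with $W\in\mathcal{C}$, and each such $I'(W')$ is the image of a $V$-module map from the grading-restricted module $W'$, so each is itself a grading-restricted generalized $V$-submodule of $(W_1\otimes W_2)^{*}$. Closure under addition is immediate from closure of $\mathcal{C}$ under finite direct sums (two $C_1$-cofinite grading-restricted modules have a $C_1$-cofinite grading-restricted direct sum), which lets us rewrite a finite sum of images $I'(W') + \tilde I'(\tilde W')$ as the image of the single $P(z)$-product $(W\oplus\tilde W,I+\tilde I)$.

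To verify lower-boundedness, I would apply Theorem \ref{gradings-submod}(2) to each $I'(W')$: being grading-restricted, each is a lower-bounded submodule of $(W_1\otimes W_2)^{*}$, and the theorem produces a decomposition $I'(W')=\coprod_{i=1}^{k}\coprod_{n\in\N}(I'(W'))_{[h_i+n]}$ in which the numbers $h_1,\dots,h_k$ depend only on $W_1$ and $W_2$. The uniformity of $\{h_1,\dots,h_k\}$ across all choices of $(W,I)$ means that every weight appearing in the union $W_1\hboxtr_{P(z)} W_2$ lies in the fixed set $\bigcup_{i=1}^{k}(h_i+\N)$, so $W_1\hboxtr_{P(z)} W_2$ is lower-bounded. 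A second application of Theorem \ref{gradings-submod}(2), now to $W_1\hboxtr_{P(z)} W_2$ itself, then gives that its contragredient $W_1\boxtimes_{P(z)} W_2$ is a $C_1$-cofinite grading-restricted generalized $V$-module and thus lies in $\mathcal{C}$; Proposition \ref{HLZ4-Prop5.37} finishes the argument. The step I expect to be the main obstacle is precisely the passage from the uniform weight control on each constituent $I'(W')$ to lower-boundedness of the union, which is what forces the uniform $h_1,\dots,h_k$ of Theorem \ref{gradings-submod}(1) to be doing real work; everything else reduces to unpacking definitions and applying already-proved results.
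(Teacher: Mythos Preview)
Your proposal is correct and follows essentially the same route as the paper: apply Theorem~\ref{gradings-submod}(2) to each constituent $I'(W')$ to obtain the uniform weight set $\{h_1,\dots,h_k\}$, deduce that the union $W_1\hboxtr_{P(z)}W_2$ is lower bounded, and then apply Theorem~\ref{gradings-submod}(2) once more to $W_1\hboxtr_{P(z)}W_2$ itself. Your working directly from Definition~\ref{hboxtr} (rather than citing Proposition~\ref{backslash=union}, whose hypotheses include closure under $P(z)$-tensor products) and your explicit appeal to Proposition~\ref{HLZ4-Prop5.37} for the closure assertion are minor expository refinements, not a different argument.
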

\begin{proof}
By Proposition \ref{backslash=union},  $W_{1}\hboxtr_{P(z)}W_{2}$
is the sum of  grading-restricted generalized $V$-modules 
contained in $(W_{1}\otimes W_{2})^{*}$
such that their contragredient are objects of $\mathcal{C}$. 
By the second part of Theorem \ref{gradings-submod}, these 
 grading-restricted generalized $V$-modules are of the form
$$W=\coprod_{i=1}^{k}\coprod_{n\in \N}
(W)_{[h_{i}+n]}.$$ 
Thus as a sum of such generalized $V$-modules, $W_{1}\hboxtr_{P(z)}W_{2}$ 
is lower-bounded. Then by the second part of Theorem \ref{gradings-submod} again,
$W_{1}\hboxtr_{P(z)}W_{2}$  itself is also grading restricted 
and its contragredient $W_{1}\boxtimes_{P(z)}W_{2}$ is $C_{1}$-cofinite. 
\end{proof}

We see that Assumption 12 is also satisfied. 

\begin{cor}\label{W-lambda}
Let $W_{1}$ and $W_{2}$ be $C_{1}$-cofinite grading-restricted 
generalized $V$-modules.  Let $\lambda\in (W_{1}\otimes W_{2})^{*}$
be a generalized eigenvector of $L_{P(z)}(0)$
satisfying the compatibility condition.
Assuming that the generalized $V$-module $W_{\lambda}$ 
generated by $\lambda$ is lower bounded. Then 
$W_{\lambda}$ is grading-restricted and $W_{\lambda}'$ is 
$C_{1}$-cofinite. 
\end{cor}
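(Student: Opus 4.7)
The plan is to reduce this corollary directly to the second part of Theorem \ref{gradings-submod}, which was proved for any lower-bounded generalized $V$-module sitting inside $(W_1 \otimes W_2)^*$ with the $Y'_{P(z)}$ action.

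First I would recall that, by Theorem 5.49 of \cite{HLZ4}, any $\lambda \in (W_1 \otimes W_2)^*$ which is a generalized eigenvector for $L_{P(z)}(0)$ and satisfies the $P(z)$-compatibility condition generates a generalized $V$-submodule $W_\lambda$ of $(W_1 \otimes W_2)^*$ (equipped with the action $Y'_{P(z)}(\cdot, x)$ of $V$ and the corresponding actions of $L(-1)$, $L(0)$, $L(1)$). Thus $W_\lambda$ is a generalized $V$-module contained in $(W_1 \otimes W_2)^*$, which is precisely the setup of the second part of Theorem \ref{gradings-submod}.

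Next, since by hypothesis $W_\lambda$ is lower bounded, the second part of Theorem \ref{gradings-submod} applies with $W = W_\lambda$. It yields immediately that $W_\lambda$ is grading-restricted (in fact of the form $\coprod_{i=1}^{k}\coprod_{n \in \N} (W_\lambda)_{[h_i + n]}$ for the complex numbers $h_1, \dots, h_k$ depending only on $W_1$ and $W_2$), and that the contragredient $W_\lambda'$ is $C_1$-cofinite. This gives both conclusions of the corollary.

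There is no real obstacle here: the work was already done in Theorem \ref{gradings-submod}. The only thing to verify is that the hypotheses of that theorem genuinely match the situation of the corollary, namely that the $V$-action on $W_\lambda \subset (W_1 \otimes W_2)^*$ in Theorem 5.49 of \cite{HLZ4} is indeed the $Y'_{P(z)}$ action referenced in the statement of Theorem \ref{gradings-submod}, which it is by construction.
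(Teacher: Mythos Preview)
Your proposal is correct and follows essentially the same approach as the paper: the paper's proof is a single sentence stating that this is the special case of the second part of Theorem \ref{gradings-submod} with $W = W_\lambda$. You have simply added the explicit justification (via Theorem 5.49 of \cite{HLZ4}) that $W_\lambda$ is indeed a generalized $V$-submodule of $(W_1 \otimes W_2)^*$ under the $Y'_{P(z)}$-action, which is implicit in the paper.
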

\begin{proof}
This result is the special case of the second part of  
Theorem \ref{gradings-submod} with  $W=W_{\lambda}$
\end{proof}

\noindent {\it Proof of Theorem \ref{main}}. We  have verified Assumptions 1, 4,  6, 8, 9, 11, 12 
for the category $\mathcal{C}$ of $C_{1}$-cofinite grading-restricted generalized $V$-modules. 
By Theorem \ref{1478910-vtc}, we obtain the conclusion of Theorem \ref{main}. 
\hspace*{\fill}\mbox{$\Box$}

\noindent {\small \sc Department of Mathematics, Rutgers University,
110 Frelinghuysen Rd., Piscataway, NJ 08854-8019}

\noindent {\em E-mail address}: yzhuang@math.rutgers.edu

\end{document}